\newtheorem{theorem}{Theorem}[section]
\newtheorem{proposition}[theorem]{Proposition}
\newtheorem{lemma}[theorem]{Lemma}
\newtheorem{corollary}[theorem]{Corollary}
\theoremstyle{definition}
\newtheorem{definition}[theorem]{Definition}
\newtheorem{notation}[theorem]{Notation}
\newtheorem{example}[theorem]{Example}
\newtheorem{remark}[theorem]{Remark}
\numberwithin{equation}{section}
\newcommand\nn{\mathbb{N}}
\newcommand\qq{\mathbb{Q}}
\newcommand\rr{\mathbb{R}}
\newcommand\zz{\mathbb{Z}}
\newcommand\pval{\mathsf{v}_p}
\newcommand{\excise}[1]{}
\begin{document}

	\mbox{}
	\title{The Elasticity of Puiseux Monoids}
	\author{Felix Gotti}
	\address{Mathematics Department\\UC Berkeley\\Berkeley, CA 94720}
	\email{felixgotti@berkeley.edu}
	\author{Christopher O'Neill}
	\address{Mathematics Department\\UC Davis\\Davis, CA 95616}
	\email{coneill@math.ucdavis.edu}
	\date{\today}
	
	\begin{abstract}
		Let $M$ be an atomic monoid and let $x$ be a non-unit element of $M$. The elasticity of $x$, denoted by $\rho(x)$, is the ratio of its largest factorization length to its shortest factorization length, and it measures how far is $x$ from having a unique factorization. The elasticity $\rho(M)$ of $M$ is the supremum of the elasticities of all non-unit elements of $M$. The monoid $M$ has accepted elasticity if $\rho(M) = \rho(m)$ for some $m \in M$. In this paper, we study the elasticity of Puiseux monoids (i.e., additive submonoids of $\qq_{\ge 0}$). First, we characterize the Puiseux monoids $M$ having finite elasticity and find a formula for $\rho(M)$. Then we classify the Puiseux monoids having accepted elasticity in terms of their sets of atoms. When $M$ is a primary Puiseux monoid, we describe the topology of the set of elasticities of $M$, including a characterization of when $M$ is a bounded factorization monoid. Lastly, we give an example of a Puiseux monoid that is bifurcus (that is, every nonzero element has a factorization of length at most $2$).
	\end{abstract}
	
	\maketitle
	
	\section{Introduction}
	\label{sec:intro}
	
	Non-unique factorizations of monoid elements $m \in M$ is often studied using arithmetic invariants that measure how far from unique the factorizations of $m$ are. One of the most popular factorization invariants is elasticity, computed as the ratio of the largest factorization length to the shortest, which provides a concise (albeit coarse) measure of the non-unique factorizations of $m$. The elasticity of $M$, computed as the supremum of the elasticities obtained within $M$, provides a global measure of the non-uniqueness of factorizations within $M$.
	
	Puiseux monoids, which are additive submonoids of $\qq_{\ge 0}$, are a recent addition to the factorization theory literature. Introduced as a generalization of numerical monoids (additive submonoids of the natural numbers), Puiseux monoids are a natural setting in which to study non-unique factorization theory. Indeed, questions in factorization theory often involve realization of various pathological behaviors (see, for instance,~\cite{CGP14} and~\cite{GS17}), and Puiseux monoids offer a wide variety of such behaviors. Many known results for numerical monoids, including concise characterizations of some invariants \cite{CHM06} and pathological behaviors of others \cite{ACHP07}, are due largely in part to the Archimedean property, i.e., that any two nonzero elements have a common multiple. Puiseux monoids are also Archimedean, but need not be finitely generated like numerical monoids, resulting in a wider array of possible behaviors.
	
	To date, the study of Puiseux monoids has centered around classifying their atomic structure~\cite{fG17,GG16}. In this paper, we initiate the study of non-unique factorization invariants for the family of Puiseux monoids, focusing specifically on the elasticity invariant. After giving some preliminary definitions in Section~\ref{sec:background}, we characterize which Puiseux monoids have finite elasticity (Theorem~\ref{t:pmelast}), a result which closely resembles the known formula for numerical monoids~\cite{CHM06}. We also specify which Puiseux monoids have accepted elasticity (Theorem~\ref{t:pmaccept}). In Sections~\ref{sec:ppmbf} and~\ref{sec:ppmelasticity}, we focus our attention on primary Puiseux monoids (Definition~\ref{d:primary}), characterizing when a primary Puiseux monoid $M$ is a BF-monoid (Theorem~\ref{t:ppmbf}) and describing the set of elasticities of $M$ (Theorems~\ref{t:finiteunstable} and~\ref{t:infiniteunstable}). We conclude with an example of a bifurcus Puiseux monoid in Section~\ref{sec:bifurcus}.

	\section{Background}
	\label{sec:background}
	
	In this section, we fix notation and establish the nomenclature we will use later. To do this, we recall some basic definitions related to commutative monoids and their sets of atoms. Reference material on commutative monoids can be found in Grillet~\cite{pG01}. In addition, the monograph of Geroldinger and Halter-Koch~\cite{GH06} offers extensive background information on atomic monoids and non-unique factorization theory.
	
	\begin{notation}\label{n:numdenom}
		The double-struck symbols $\mathbb{N}$ and $\mathbb{N}_0$ denote the sets of positive integers and non-negative integers, respectively. If $r \in \qq_{> 0}$, then the unique $a,b \in \nn$ such that $r = a/b$ and $\gcd(a,b)=1$ are denoted by $\mathsf{n}(r)$ and $\mathsf{d}(r)$, respectively.
	\end{notation}

	The unadorned term \emph{monoid} always refers to a commutative cancellative monoid, say $M$, which is always written additively; in particular, ``$+$'' denotes the operation of $M$, while $0$ denotes the identity element. We write $M^\bullet$ for the set of nonzero elements of $M$. For $a,b \in M$, we say that $a$ \emph{divides} $b$ in $M$ if there exists $a' \in M$ such that $a + a' = b$. An element dividing $0$ in $M$ is called a \emph{unit}, and $M$ is called \emph{reduced} if its only unit is $0$. Every monoid in this paper is reduced.
	
	\begin{definition}\label{d:factorizations}
		An element $a \in M^\bullet$ is \emph{irreducible} or an \emph{atom} if whenever $a = b + c$, either $b$ or $c$ is a unit. We denote the set of atoms of $M$ by $\mathcal{A}(M)$, and call $M$ \emph{atomic} if $M = \langle \mathcal{A}(M) \rangle$. A \emph{factorization} of an element $x \in M^\bullet$ is a formal sum
		\[
			a_1 + \dots + a_m
		\]
		of atoms adding to $x$. We write $\mathsf{Z}(x)$ for the \emph{set of factorizations} of~$x$. If~$|\mathsf Z(x)| < \infty$ for all $x \in M^\bullet$, we say $M$ is an \emph{FF-monoid} (\emph{finite factorization monoid}).  
	\end{definition}
		
	\begin{definition}\label{d:elasticity}
		Given a monoid element $x \in M$, the \emph{length} of a factorization $z \in \mathsf Z(x)$, denoted $|z|$, is the number of formal summands in $z$. In addition,
		\[
			\mathsf{L}(x) = \{|z| : z \in \mathsf{Z}(x)\}
		\]
		is called \emph{length set} of $x$.  If $M$ is atomic and $|\mathsf{L}(x)| < \infty$ for every $x \in M$, we say that $M$ is a \emph{bounded factorization} monoid or, simply, a \emph{BF-monoid}. The quotient
		\[
			\rho(x) = \frac{\sup \mathsf{L}(x)}{\inf \mathsf{L}(x)}
		\]
		is the \emph{elasticity} of $x$. The \emph{set of elasticities} of $M$ is $R(M) = \{\rho(x) \mid x \in M^\bullet\} \subseteq [1, \infty]$, while the \emph{elasticity} of $M$ is
		\[
			\rho(M) = \sup R(M).
		\]
		The elasticity of $M$ is \emph{accepted} if there exists $x \in M$ such that $\rho(M) = \rho(x)$. Moreover, $M$ is said to be \emph{fully elastic} if $R(M)$ is an interval of rationals.
	\end{definition}
	
	\begin{definition}\label{d:puiseux}
		A \emph{Puiseux monoid} is an additive submonoid $M$ of $\qq_{\ge 0}$. We say $M$ is a \emph{numerical monoid} if it is finitely generated.
	\end{definition}
		
	\begin{remark}\label{r:numerical}
		The definition of numerical monoid given above differs slightly from the standard definition, which requires $M$ to be a cofinite subset of $\nn_0$. However, there is no risk of ambiguity here as every finitely generated Puiseux monoid is isomorphic to a (standardly defined) numerical monoid.
	\end{remark}
	
	\begin{remark}\label{r:puiseux}
		Albeit a natural generalization of numerical monoids, Puiseux monoids need not be atomic; see, for instance, \cite[Example~3.5]{fG17}. For more on the atomic structure of Puiseux monoids, see \cite{fG17} and \cite{GG16}.
	\end{remark}
	
	\section{Elasticity}
	\label{sec:elasticity}
	
	In this section, we examine the elasticity $\rho(M)$ of an arbitrary Puiseux monoid $M$. More specifically, we~give a formula for the elasticity $\rho(M)$ in terms of the atoms of $M$ (Theorem~\ref{t:pmelast}), and determine when this elasticity is accepted (Theorem~\ref{t:pmaccept}). The next proposition is used in the proof of Theorem~\ref{t:pmelast}.
	
	\begin{proposition} \label{p:zerolimitbf}
		Let $M$ be a Puiseux monoid. If $0$ is not a limit point of $M$, then~$M$ is a BF-monoid.
	\end{proposition}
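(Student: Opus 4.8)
The plan is to extract from the hypothesis a uniform positive lower bound on the nonzero elements of $M$, and then run a single ``maximal decomposition'' argument that simultaneously yields atomicity and bounded factorization lengths.

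First I would translate the hypothesis into a usable form. Since $0$ is not a limit point of $M$, there exists $\varepsilon > 0$ with $M \cap (0,\varepsilon) = \emptyset$; equivalently, every element of $M^\bullet$ is at least $\varepsilon$. The key observation is then purely arithmetic: if $x \in M^\bullet$ and $x = x_1 + \dots + x_n$ with each $x_i \in M^\bullet$, then summing the inequalities $x_i \ge \varepsilon$ gives $x \ge n\varepsilon$, so $n \le x/\varepsilon$. In particular, for a fixed $x$ there is an absolute upper bound $\lfloor x/\varepsilon \rfloor$ on the number of summands in any decomposition of $x$ into nonzero elements — a fortiori on the length of any factorization of $x$ into atoms.

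Next I would deduce atomicity. Fix $x \in M^\bullet$ and, among all decompositions $x = x_1 + \dots + x_n$ into nonzero elements of $M$, pick one with $n$ maximal; such a maximum exists because $n \le \lfloor x/\varepsilon \rfloor$ for all of them. Each $x_i$ must then be an atom: otherwise $x_i = b + c$ with $b, c \in M^\bullet$, and substituting this splitting produces a decomposition of $x$ with $n+1$ nonzero summands, contradicting maximality. Hence $x$ is a sum of atoms, and since $x$ was arbitrary, $M$ is atomic. Finally, atomicity plus the length bound gives $\mathsf{L}(x) \subseteq \{1, 2, \dots, \lfloor x/\varepsilon \rfloor\}$ for every $x \in M^\bullet$, so $|\mathsf{L}(x)| < \infty$; this is precisely the statement that $M$ is a BF-monoid.

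I do not expect a genuine obstacle here; the one point that needs care is that ``BF-monoid'' presupposes atomicity, so it is not enough to bound the lengths of factorizations that happen to exist — one must actually produce atomic factorizations, which is exactly what the maximal-decomposition argument delivers.
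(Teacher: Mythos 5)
Your proof is correct. The one point worth comparing: the paper does not reprove atomicity at all --- it cites it from an external result (\cite[Theorem~3.10]{fG17}) and then only verifies bounded length sets, arguing by contradiction that a factorization of $x$ with more than $x/\epsilon$ atoms would force the smallest atom below $\epsilon$, making $0$ a limit point. You instead give a self-contained argument: the hypothesis yields a uniform lower bound $\varepsilon>0$ on $M^\bullet$, hence any decomposition of $x$ into nonzero summands has at most $\lfloor x/\varepsilon\rfloor$ parts, and a maximal-length decomposition must consist of atoms (splitting a non-atom would lengthen it, and in a Puiseux monoid the only unit is $0$, so ``not an atom'' really does mean it splits into two nonzero pieces). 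This simultaneously delivers atomicity and the length bound, so your route is more elementary and does not lean on the cited theorem, at the cost of a slightly longer write-up; the paper's version is shorter but only because the atomicity half is outsourced. You were also right to flag that bounding lengths alone is insufficient without establishing atomicity --- that is exactly the role the citation plays in the paper and the maximal-decomposition step plays in your argument.
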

	
	\begin{proof}
		The atomicity of $M$ when $0$ is not a limit point is precisely \cite[Theorem~3.10]{fG17}. So it suffices to verify that every element of $M$ has bounded length set. Suppose, by way of contradiction, that $M$ has an element $x$ with $|\mathsf{L}(x)| = \infty$. For any $\epsilon > 0$, there exists $n > x/\epsilon$ such that $x = a_1 + \dots + a_n$ for some $a_1, \dots, a_n \in \mathcal{A}(M)$. Assuming $a_1 \le a_i$ for $i=1,\dots,n$, we obtain 
		\[
			na_1 \le a_1 + \dots + a_n = x,
		\]
		and, therefore, it follows that $a_1 \le x/n < \epsilon$. As such, $0$ is a limit point of $M$, a contradiction.
	\end{proof}
	
	\begin{theorem} \label{t:pmelast}
		Let $M$ be an atomic Puiseux monoid. If $0$ is a limit point of $M$, then $\rho(M) = \infty$. Otherwise,
		\begin{equation} \label{eq:pmelast}
			\rho(M) = \frac{\sup \mathcal{A}(M)}{\inf \mathcal{A}(M)}.
		\end{equation}
	\end{theorem}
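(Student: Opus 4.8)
The plan is to control, for each $x \in M^\bullet$, the lengths of its factorizations in terms of $a_* := \inf \mathcal{A}(M)$ and $a^* := \sup \mathcal{A}(M)$. The first step is to note that $0$ is a limit point of $M$ if and only if $a_* = 0$: since $\mathcal{A}(M) \subseteq M$ one direction is immediate, and conversely if $x = a_1 + \dots + a_\ell \in \mathsf{Z}(x)$ then each $a_i \le x$, so elements of $M$ arbitrarily close to $0$ force atoms arbitrarily close to $0$. Adopting the convention that $a^*/a_* = \infty$ when $a_* = 0$ (note $a^* > 0$, as $M$ has an atom), the formula~\eqref{eq:pmelast} and the assertion $\rho(M) = \infty$ agree in the limit-point case; so it suffices to prove the displayed equality whenever $a_* > 0$ and to prove $\rho(M) = \infty$ whenever $a_* = 0$. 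Note also that Proposition~\ref{p:zerolimitbf} shows $M$ is a BF-monoid when $a_* > 0$, so each $\mathsf{L}(x)$ is finite.

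For the upper bound, let $x \in M^\bullet$ and take any factorization $a_1 + \dots + a_\ell \in \mathsf{Z}(x)$. Summing the inequalities $a_* \le a_i \le a^*$ over $i$ yields $\ell a_* \le x \le \ell a^*$, so $x/a^* \le \ell \le x/a_*$ holds for every $\ell \in \mathsf{L}(x)$. Hence $\rho(x) = \sup\mathsf{L}(x)/\inf\mathsf{L}(x) \le (x/a_*)/(x/a^*) = a^*/a_*$, and taking the supremum over $x \in M^\bullet$ gives $\rho(M) \le a^*/a_*$ (which is vacuous if $a^* = \infty$).

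For the reverse inequality I would exhibit elements with elasticity approaching $a^*/a_*$, built from common multiples of atoms. Given atoms $a, b \in \mathcal{A}(M)$, there are $m, n \in \nn$ with $ma = nb$ --- for instance $m = \mathsf{n}(b)\mathsf{d}(a)$ and $n = \mathsf{n}(a)\mathsf{d}(b)$, whose common value is $\mathsf{n}(a)\mathsf{n}(b) \in \nn$ --- and then $m/n = b/a$. The element $x := ma = nb$ has a factorization consisting of $m$ copies of $a$ and one consisting of $n$ copies of $b$, so $\sup\mathsf{L}(x) \ge \max\{m,n\}$ and $\inf\mathsf{L}(x) \le \min\{m,n\}$, whence $\rho(x) \ge \max\{a/b,\, b/a\}$. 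Therefore $\rho(M) \ge \sup\{a/b : a, b \in \mathcal{A}(M)\} = a^*/a_*$: when $a^* < \infty$ and $a_* > 0$, one picks for each $\varepsilon > 0$ atoms $a > a^* - \varepsilon$ and $b < a_* + \varepsilon$ and lets $\varepsilon \to 0^+$; when $a^* = \infty$ or $a_* = 0$, one fixes a single atom and uses the other variable to make the ratio arbitrarily large. Together with the previous paragraph this yields~\eqref{eq:pmelast} when $a_* > 0$ and $\rho(M) = \infty$ when $a_* = 0$.

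The only step that is not routine bookkeeping is this last one: since neither $\sup\mathcal{A}(M)$ nor $\inf\mathcal{A}(M)$ need be attained --- and $\sup\mathcal{A}(M)$ may be infinite --- there is in general no single element of $M$ realizing the value $a^*/a_*$, and one must instead produce a sequence of elements $ma = nb$ whose elasticities converge to it. The rest of the argument rests only on the elementary length estimate $x/a^* \le |z| \le x/a_*$ for $z \in \mathsf{Z}(x)$.
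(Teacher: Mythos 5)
Your proposal is correct and follows essentially the same route as the paper: the upper bound comes from the elementary estimate $\ell\,\inf\mathcal A(M) \le x \le \ell\,\sup\mathcal A(M)$ for any length-$\ell$ factorization, and the lower bound from elements of the form $\mathsf n(a)\mathsf n(b)$ factored as multiples of two atoms $a,b$ chosen in sequences approaching $\sup\mathcal A(M)$ and $\inf\mathcal A(M)$ (with the limit-point case handled by atoms tending to $0$, exactly as in the paper). Your explicit observation that $0$ is a limit point of $M$ if and only if $\inf\mathcal A(M)=0$ is a small clarification the paper leaves implicit, but it does not change the argument.
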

	
	\begin{proof}
		First, suppose $\{a_n\}$ is a sequence of atoms decreasing to $0$ and, for every $n \ge 1$, set $x_n = \mathsf{n}(a_n)\mathsf{n}(a_1)$. We can factor each $x_n$ as
		\[
			x_n = \mathsf{n}(a_n)\mathsf{d}(a_1) a_1 = \mathsf{n}(a_1)\mathsf{d}(a_n) a_n,
		\]
		so $\inf \mathsf{L}(x_n) \le \mathsf{n}(a_n)\mathsf{d}(a_1)$ and $\sup \mathsf{L}(x_n) \ge \mathsf{n}(a_1)\mathsf{d}(a_n)$. As a result,
		\[
			\rho(x_n) = \frac{\sup \mathsf{L}(x_n)}{\inf \mathsf{L}(x_n)} \ge \frac{\mathsf{n}(a_1)\mathsf{d}(a_n)}{\mathsf{n}(a_n)\mathsf{d}(a_1)} = \frac{a_1}{a_n},
		\]
		which implies that the sequence $\{\rho(x_n)\}$ is unbounded. Thus, $\rho(M) = \infty$.
		
		Now, suppose $0$ is not a limit point of $M$. Set $\mathsf{A} = \sup \mathcal{A}(M)$ and $\mathsf{a} = \inf \mathcal{A}(M)$.  Let $\{b_n\}$ and $\{c_n\}$ be sequences of atoms converging to $\mathsf{a}$ and $\mathsf{A}$, respectively. Setting $x_n = \mathsf{n}(b_n)\mathsf{n}(c_n)$, we find that
		\[
			\rho(M) \ge \rho(x_n) = \frac{\sup \mathsf{L}(x_n)}{\inf \mathsf{L}(x_n)} \ge \frac{\mathsf{n}(c_n)\mathsf{d}(b_n)}{\mathsf{n}(b_n)\mathsf{d}(c_n)} = \frac{c_n}{b_n}.
		\]
		Taking the limit when $n \to \infty$ yields $\rho(M) \ge \mathsf{A}/\mathsf{a}$. Therefore it only remains to show that $\rho(M) \le \mathsf{A}/\mathsf{a}$. Fix $x \in M^\bullet$ and set $\ell = \min \mathsf{L}(x)$ and $L = \max \mathsf{L}(x)$ (finite by Proposition~\ref{p:zerolimitbf}). If $a_1 + \dots + a_L$ and $a'_1 + \dots + a'_\ell$ are factorizations in $\mathsf{Z}(x)$, it follows that
		\[
			L \mathsf{a} \le L \min_{1 \le j \le L} \{a_j\} \le a_1 + \dots + a_L = a'_1 + \dots + a'_\ell  \le \ell \max_{1 \le i \le \ell} \{a'_i\} \le \ell \mathsf{A}.
		\]
		This implies that $\rho(x) = L/\ell \le \mathsf{A}/\mathsf{a}$, so we obtain $\rho(M) \le \mathsf{A}/\mathsf{a}$, as desired.
	\end{proof}
	
	\begin{remark} \label{r:numelast}
		If $\mathcal{A}(M)$ is finite, then $M$ is a numerical monoid; in this case, Theorem~\ref{t:pmelast} coincides with the elasticity formula given in \cite[Theorem~2.1]{CHM06}.
	\end{remark}

	Recall that the elasticity of $M$ is accepted if there exists $x \in M$ with $\rho(x) = \rho(M)$.
	
	\begin{theorem} \label{t:pmaccept}
		For any atomic Puiseux monoid $M$ such that $\rho(M) < \infty$, the elasticity of $M$ is accepted if and only if $\mathcal{A}(M)$ has both a maximum and a minimum.
	\end{theorem}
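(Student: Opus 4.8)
The plan is to first reduce the statement to the situation of a BF-monoid. Since $\rho(M) < \infty$, Theorem~\ref{t:pmelast} forces $0$ not to be a limit point of $M$, so Proposition~\ref{p:zerolimitbf} shows $M$ is a BF-monoid; in particular every $\mathsf{L}(x)$ is finite and $\rho(x) = \max \mathsf{L}(x)/\min \mathsf{L}(x)$ for each $x \in M^\bullet$. Write $\mathsf{a} = \inf \mathcal{A}(M)$ and $\mathsf{A} = \sup \mathcal{A}(M)$; then Theorem~\ref{t:pmelast} gives $\rho(M) = \mathsf{A}/\mathsf{a}$, and because $0$ is not a limit point of $M$ we have $0 < \mathsf{a} \le \mathsf{A} < \infty$. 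I will repeatedly use the observation from the proof of Theorem~\ref{t:pmelast}: if $a_1 + \dots + a_k \in \mathsf{Z}(x)$ then $x/\mathsf{A} \le k \le x/\mathsf{a}$. (The degenerate cases $M = \{0\}$ and $|\mathcal{A}(M)| = 1$ are immediate, so I assume $|\mathcal{A}(M)| \ge 2$.)

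For the backward implication, suppose $\mathcal{A}(M)$ has a minimum $\mathsf{a}$ and a maximum $\mathsf{A}$, both atoms. Since $\mathsf{n}(\mathsf{a}) = \mathsf{d}(\mathsf{a})\,\mathsf{a} \in M$, the positive integer $x := \mathsf{n}(\mathsf{a})\mathsf{n}(\mathsf{A})$, being an integer multiple of $\mathsf{n}(\mathsf{a})$, lies in $M$. The identities $x = (\mathsf{n}(\mathsf{A})\mathsf{d}(\mathsf{a}))\,\mathsf{a} = (\mathsf{n}(\mathsf{a})\mathsf{d}(\mathsf{A}))\,\mathsf{A}$ exhibit factorizations of $x$ of lengths $\mathsf{n}(\mathsf{A})\mathsf{d}(\mathsf{a})$ and $\mathsf{n}(\mathsf{a})\mathsf{d}(\mathsf{A})$, while the length bound gives $\mathsf{n}(\mathsf{a})\mathsf{d}(\mathsf{A}) = x/\mathsf{A} \le k \le x/\mathsf{a} = \mathsf{n}(\mathsf{A})\mathsf{d}(\mathsf{a})$ for every $k \in \mathsf{L}(x)$. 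Hence $\min \mathsf{L}(x) = \mathsf{n}(\mathsf{a})\mathsf{d}(\mathsf{A})$ and $\max \mathsf{L}(x) = \mathsf{n}(\mathsf{A})\mathsf{d}(\mathsf{a})$, so $\rho(x) = \mathsf{A}/\mathsf{a} = \rho(M)$ and the elasticity of $M$ is accepted.

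For the forward implication, suppose $\rho(x) = \rho(M) = \mathsf{A}/\mathsf{a}$ for some $x \in M^\bullet$, and put $\ell = \min \mathsf{L}(x)$ and $L = \max \mathsf{L}(x)$, so that $L/\ell = \mathsf{A}/\mathsf{a}$, i.e. $L\mathsf{a} = \ell \mathsf{A}$. Fix factorizations $x = a_1 + \dots + a_L = a_1' + \dots + a_\ell'$ of maximal and minimal length. Then the chain
\[
	L\mathsf{a} \le L\min_{1\le j\le L} a_j \le a_1 + \dots + a_L = a_1' + \dots + a_\ell' \le \ell\max_{1\le i\le\ell} a_i' \le \ell\mathsf{A}
\]
has equal extreme terms, so every inequality is an equality. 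From $L\min_j a_j = a_1 + \dots + a_L$ and $a_j \ge \min_j a_j$ for all $j$, each $a_j$ equals $\min_j a_j$, which equals $\mathsf{a}$ by the leftmost equality; thus $\mathsf{a} \in \mathcal{A}(M)$, so $\mathsf{a} = \min \mathcal{A}(M)$. Symmetrically, $a_1' + \dots + a_\ell' = \ell\max_i a_i'$ with $a_i' \le \max_i a_i'$ forces each $a_i' = \max_i a_i' = \mathsf{A}$, so $\mathsf{A} \in \mathcal{A}(M)$ and $\mathsf{A} = \max \mathcal{A}(M)$.

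I expect the only real subtlety to be in the forward direction: collapsing the displayed chain must be used to conclude not merely that $\min_j a_j = \mathsf{a}$ (respectively $\max_i a_i' = \mathsf{A}$) but that the \emph{entire} extremal factorization consists of copies of a single atom, which is exactly what promotes $\mathsf{a} = \inf \mathcal{A}(M)$ to $\mathsf{a} = \min \mathcal{A}(M)$ and $\mathsf{A} = \sup \mathcal{A}(M)$ to $\mathsf{A} = \max \mathcal{A}(M)$. Everything else is routine, once the reduction to the BF-monoid case via Theorem~\ref{t:pmelast} and Proposition~\ref{p:zerolimitbf} is in place.
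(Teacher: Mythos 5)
Your proof is correct and takes essentially the same approach as the paper: the backward direction uses the same element $x = \mathsf{n}(\mathsf{a})\mathsf{n}(\mathsf{A})$ with its two explicit factorizations, and the forward direction collapses the same chain of inequalities $L\mathsf{a} \le L\min_j a_j \le \sum a_j = \sum a_i' \le \ell\max_i a_i' \le \ell\mathsf{A}$ into equalities to conclude that the infimum and supremum of $\mathcal{A}(M)$ are themselves atoms. Your explicit reduction to the BF-monoid case via Theorem~\ref{t:pmelast} and Proposition~\ref{p:zerolimitbf} is just a careful spelling-out of what the paper uses implicitly.
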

	
	\begin{proof}
		Let $a = \sup \mathcal{A}(M)$ and $b = \inf \mathcal{A}(M)$. By Theorem \ref{t:pmelast}, we have $\rho(M) = a/b$. For the direct implication, suppose $x \in M^\bullet$ satisfies $\rho(x) = \rho(M)$. Set $L = \max \mathsf L(x)$ and $\ell = \min \mathsf L(x)$, and let $a_1 + \dots + a_L$ and $a'_1 + \dots + a'_\ell$ be two factorizations in $\mathsf{Z}(x)$.  Since $L/\ell = a/b$ by assumption, every inequality in the expression
		\[
			Lb \le L \min_{1 \le j \le L} \{a_j\} \le a_1 + \dots + a_L = a'_1 + \dots + a'_\ell \le \ell \max_{1 \le i \le \ell} \{a'_i\} \le \ell a
		\]
		is, indeed, an equality. Hence each $a_i = a$ and each $a_j' = b$. As such, $a, b \in \mathcal A(M)$. Conversely, suppose that $a, b \in \mathcal{A}(M)$. Setting $x = \mathsf{n}(a)\mathsf{n}(b)$ yields
		\[
			\rho(x) = \frac{\sup \mathsf{L}(x)}{\inf \mathsf{L}(x)} \ge \frac{\mathsf{n}(a)\mathsf{d}(b)}{\mathsf{n}(b)\mathsf{d}(a)} = \frac{a}{b} = \rho(M),
		\]
		so $\rho(M) = \rho(x)$.
	\end{proof}
	
	Corollary~\ref{c:pmaccept} follows immediately from the proof of Theorem~\ref{t:pmaccept} and matches the analogous result for numerical semigroups \cite{CHM06}.
	
	\begin{corollary} \label{c:pmaccept}
		If the elasticity of $M$ is both finite and accepted, then $x \in M$ has elasticity $\rho(M)$ if and only if $x$ is an integer multiple of both $\min \mathcal A(M)$ and $\max \mathcal A(M)$.
	\end{corollary}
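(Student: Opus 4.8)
The plan is to read off both implications directly from the inequality chain already established in the proof of Theorem~\ref{t:pmaccept}, which is exactly why the corollary is attributed to that proof rather than to its statement. Throughout, write $a = \max \mathcal{A}(M)$ and $b = \min \mathcal{A}(M)$; these exist by Theorem~\ref{t:pmaccept} since $\rho(M)$ is finite and accepted, and $\rho(M) = a/b$ by Theorem~\ref{t:pmelast}.

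For the forward implication, suppose $\rho(x) = \rho(M)$, and set $L = \max \mathsf{L}(x)$ and $\ell = \min \mathsf{L}(x)$ (both finite by Proposition~\ref{p:zerolimitbf}). Choose factorizations $a_1 + \dots + a_L$ and $a_1' + \dots + a_\ell'$ in $\mathsf{Z}(x)$. As in the proof of Theorem~\ref{t:pmaccept}, the hypothesis $L/\ell = a/b$ forces every inequality in
\[
	Lb \le L \min_{1 \le j \le L} \{a_j\} \le a_1 + \dots + a_L = a_1' + \dots + a_\ell' \le \ell \max_{1 \le i \le \ell} \{a_i'\} \le \ell a
\]
to be an equality. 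The equality $L \min_j \{a_j\} = a_1 + \dots + a_L$ forces every $a_j$ to equal $\min_j \{a_j\}$, and combined with $L \min_j \{a_j\} = Lb$ this gives $a_j = b$ for all $j$; hence $x = Lb$. Symmetrically, $a_i' = a$ for all $i$, so $x = \ell a$. Thus $x$ is a (positive) integer multiple of both $\min \mathcal{A}(M)$ and $\max \mathcal{A}(M)$.

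For the reverse implication, suppose $x = sb = ta$ for some $s, t \in \nn$. Then the sum of $s$ copies of $b$ and the sum of $t$ copies of $a$ are both factorizations of $x$, so $\sup \mathsf{L}(x) \ge s$ and $\inf \mathsf{L}(x) \le t$. Since $sb = ta$ gives $s/t = a/b$, we obtain $\rho(x) \ge s/t = a/b = \rho(M)$; as $\rho(x) \le \rho(M)$ always holds, equality follows.

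I expect no genuine obstacle here. The only point requiring a little care is noticing that the equality $L \min_j \{a_j\} = \sum_j a_j$ upgrades the conclusion "$b \in \mathcal{A}(M)$" extracted inside the proof of Theorem~\ref{t:pmaccept} to the stronger statement "$x$ is an integer multiple of $b$" (and dually for $a$); once that is observed, everything is immediate.
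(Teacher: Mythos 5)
Your proposal is correct and follows exactly the route the paper intends: the paper gives no separate proof, stating only that the corollary ``follows immediately from the proof of Theorem~\ref{t:pmaccept},'' and your argument is precisely that extraction --- the forced equalities in the inequality chain yield $x = Lb = \ell a$ in the forward direction, and the two pure-power factorizations give the reverse direction. No issues.
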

	
	\section{Primary Puiseux monoids}
	\label{sec:ppmbf}
	
	In this section and the next, we focus our attention on primary Puiseux monoids. Let us recall that $M$ is a BF-monoid if it is atomic and $\mathsf{L}(x)$ is finite for all $x \in M$. Not every atomic Puiseux monoid is a BF-monoid; see, for instance, Example~\ref{e:factorial1op}. The main result in this section is Theorem~\ref{t:ppmbf}, which characterizes those primary Puiseux monoids that are BF-monoids.
	
	\begin{definition}\label{d:primary}
		A \emph{primary} Puiseux monoid is a monoid of the form 
		\[
			\big\langle a_p/p \ \big{|} \ p \in P \ \text{ and } \ a_p \in \nn \setminus p\nn \big\rangle \subset \qq_{\ge 0}
		\]
		for some set of primes $P$.  Notice that if $a_p = 1$ for each $p \in P$, we recover the Puiseux monoids introduced in \cite[Definition 5.1]{GG16}, formerly called primary Puiseux monoids.
	\end{definition}
	
	\begin{example}\label{e:bfplot}
		Let $p_1, p_2, \dots$ be an increasing enumeration of the prime numbers, and consider the Puiseux monoid
		\[
			M = \bigg\langle \frac 12, \frac{p_n + 1}{p_n} \ \bigg{|} \ n \ge 2 \bigg\rangle.
		\]
		Figure~\ref{f:primarybf} displays the elasticities of $m \in M$ for $\rho(m) > 1$. Since $\min\mathcal A(M) = \frac{1}{2}$ and $\max\mathcal A(M) = \frac{4}{3}$, the elasticity $\rho(M) = \frac{8}{3}$ is accepted by Theorem~\ref{t:pmaccept}. Moreover, since every atom of $M$ is unstable (Definition~\ref{d:stable}), we will see in Theorem~\ref{t:infiniteunstable} that the set $R(M)$ is dense in $[1, \frac{8}{3}]$.
	\end{example}
	
	\begin{figure}[h]
	\begin{center}
		\includegraphics[width=6.0in]{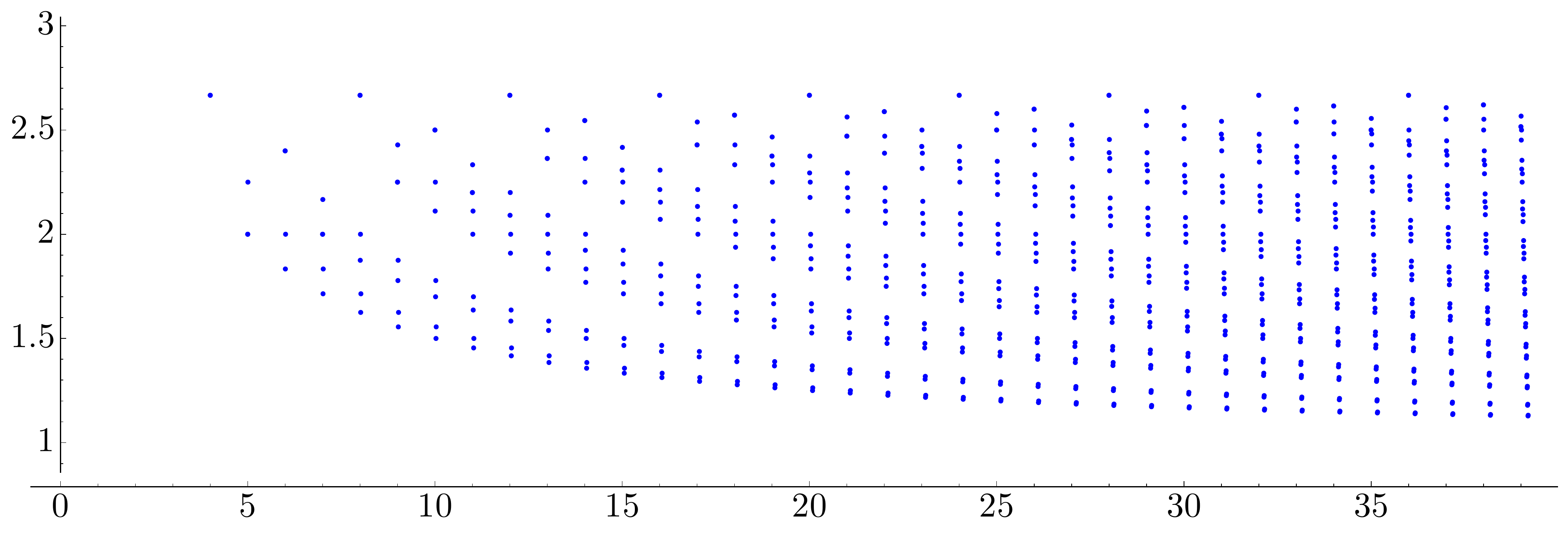}
	\end{center}
	\caption{A plot depicting the elasticities of those elements $m \in M$ with $\rho(m) > 1$ from Example~\ref{e:bfplot}.}
	\label{f:primarybf}
	\end{figure}

	\vspace{10pt}
	
	For a prime $p$ and a natural $n$, let $\pval(n)$ denote the exponent of the maximal power of $p$ dividing $n$ (define $\pval(0) = \infty$). The $p$-\emph{adic valuation map} on $\qq$ is the map defined by $\pval(0) = \infty$ and $\pval(r) = \pval(\mathsf{n}(r)) - \pval(\mathsf{d}(r))$ for $r \neq 0$. The next elementary result, whose proof follows immediately by taking each $p_i$-valuation to both sides of \eqref{eq:elementary lemma}, is frequently used throughout the paper.
	
	\begin{lemma}\label{l:ppm}
		Fix $x, a_1, \ldots a_n \in \zz$, and distinct primes $p_1, \ldots, p_n$. If
		\begin{equation} \label{eq:elementary lemma}
			x = \sum_{i=1}^n \frac{a_i}{p_i},
		\end{equation}
		then $p_i \mid a_i$ for each $i$.
	\end{lemma}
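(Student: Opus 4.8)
The plan is to follow the hint and apply, for each fixed index, the corresponding $p_i$-adic valuation to \eqref{eq:elementary lemma}, exploiting that this valuation is multiplicative and satisfies the ultrametric inequality. Concretely, fix $i \in \{1, \dots, n\}$ and isolate the $i$-th summand, rewriting \eqref{eq:elementary lemma} as
\[
	\frac{a_i}{p_i} = x - \sum_{j \neq i} \frac{a_j}{p_j}.
\]
I would then check that the $p_i$-adic valuation of the right-hand side is non-negative. Indeed, $\mathsf{v}_{p_i}(x) \ge 0$ because $x \in \zz$; and for each $j \neq i$, since $p_j$ is a prime distinct from $p_i$ we have $\mathsf{v}_{p_i}(p_j) = 0$, hence $\mathsf{v}_{p_i}(a_j/p_j) = \mathsf{v}_{p_i}(a_j) \ge 0$ as $a_j \in \zz$. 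Applying the inequality $\mathsf{v}_{p_i}(u+v) \ge \min\{\mathsf{v}_{p_i}(u), \mathsf{v}_{p_i}(v)\}$ repeatedly to the finite sum on the right then yields $\mathsf{v}_{p_i}(a_i/p_i) \ge 0$.

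From $0 \le \mathsf{v}_{p_i}(a_i/p_i) = \mathsf{v}_{p_i}(a_i) - \mathsf{v}_{p_i}(p_i) = \mathsf{v}_{p_i}(a_i) - 1$ I would conclude $\mathsf{v}_{p_i}(a_i) \ge 1$, that is, $p_i \mid a_i$; the case $a_i = 0$ is immediate and consistent with the convention $\mathsf{v}_{p_i}(0) = \infty$. Since $i$ was arbitrary, this establishes the claim for all $i$. (Alternatively, one could argue by contradiction: if $p_i \nmid a_i$ then $\mathsf{v}_{p_i}(a_i/p_i) = -1$ is strictly smaller than the valuation of every other term as well as of $x$, so the ultrametric inequality becomes an equality and forces $\mathsf{v}_{p_i}(x) = -1$, contradicting $x \in \zz$; I prefer the isolation argument above since it avoids invoking the strict-minimum refinement.)

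There is no real obstacle here: the proof is a direct unwinding of the definition of the $p$-adic valuation recalled immediately before the statement, and the only facts used are its multiplicativity, the ultrametric inequality, and the elementary observation that $\mathsf{v}_p(m) \ge 0$ for every $m \in \zz$.
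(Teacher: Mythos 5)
Your proof is correct and follows exactly the route the paper indicates (the paper offers only the one-line remark that the result ``follows immediately by taking each $p_i$-valuation to both sides''); you have simply written out the valuation computation, isolation of the $i$-th term, and ultrametric estimate in full detail. Nothing further is needed.
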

	
	
	\begin{definition} \label{d:stable}
		Let $M$ be a Puiseux monoid. We say $a \in \mathcal{A}(M)$ is \emph{stable} if the set 
		\[
			\{x \in \mathcal{A}(M) \mid \mathsf{n}(x) = \mathsf{n}(a)\}
		\]
		is infinite, and \emph{unstable} otherwise. The submonoid of $M$ generated by the stable (resp., unstable) atoms is denoted by $\mathcal S(M)$ (resp., $\mathcal U(M)$) and is called the \emph{stable} (resp., \emph{unstable}) submonoid of $M$. The elements of $\mathcal S(M)$ and $\mathcal U(M)$ are called \emph{stable} and \emph{unstable}, respectively. We call an element \emph{absolutely unstable} if it has no nonzero stable divisors in $M$. Notice that an element might be stable and unstable simultaneously.
	\end{definition}
	
	\begin{remark}\label{r:factorial1op}
		Example~\ref{e:factorial1op} demonstrates how stable atoms affect factorization lengths. Although Example~\ref{e:factorial1op} follows from results in Section~\ref{sec:ppmelasticity}, we include a thorough discussion here because the techniques used therein appear throughout the next two sections.
	\end{remark}
	
	\begin{example} \label{e:factorial1op}
		Let $P$ be an infinite set of primes, and $M = \langle 1/p \mid p \in P \rangle$. For $x \in M^\bullet$\!, we claim $|\mathsf{L}(x)| < \infty$ if and only if $|\mathsf Z(x)| = 1$. The key observation is that $\mathsf{L}(1) = P$. Clearly $P \subseteq \mathsf{L}(1)$ since $1 = p(1/p)$ for each $p \in P$. On the other hand, any factorization
		\[
			a_1 \frac 1{p_1} + \dots + a_n \frac 1{p_n} \in \mathsf{Z}(1)
		\]
		with $a_n > 0$ satisfies $p_n \mid a_n$ by Lemma~\ref{l:ppm}, meaning $a_1 + \dots + a_n = a_n = p_n \in P$.  
		
		More generally, suppose $x \in M$ has two distinct factorizations, say
		\[
			x = a_1 \frac 1{p_1} + \dots + a_m \frac 1{p_m} = b_1 \frac 1{p_1} + \dots + b_m \frac 1{p_m}.
		\]
		By reindexing appropriately, we can assume $a_m < b_m$. Subtracting the two equations and applying Lemma~\ref{l:ppm}, we find that $p_m$ must divide $b_m - a_m$. Therefore $b_m > p_m$,~so
		\[
			x = 1 + (b_m - p_m) \frac 1{p_m} + b_1 \frac 1{p_1} + \dots + b_{m-1} \frac 1{p_{m-1}}.
		\]
		Since $|\mathsf{L}(1)| = \infty$, it follows that $|\mathsf{L}(x)| = \infty$. \\ \\
	\end{example}

	\begin{lemma} \label{l:stable}
		Let $M$ be a Puiseux monoid.
		\begin{enumerate}
		\item 
		If $s \in \mathcal A(M)$ is stable, then $|\mathsf{L}(\mathsf{n}(s))| = \infty$.
		
		\item 
		If $s \in M$ is a stable element with more than one factorization, then $|\mathsf L(s)| = \infty$.
		\end{enumerate}
	\end{lemma}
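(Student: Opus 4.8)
The plan is to prove part~(1) directly and feed it into part~(2). For part~(1), write $s=\mathsf n(s)/\mathsf d(s)$ in lowest terms, so that $\mathsf n(s)=\mathsf d(s)\,s\in M$. Since $s$ is stable there are infinitely many atoms $x\in\mathcal A(M)$ with $\mathsf n(x)=\mathsf n(s)$, and each of them gives a factorization $\mathsf n(s)=\mathsf d(x)\,x$ of length $\mathsf d(x)$. Distinct atoms sharing the numerator $\mathsf n(s)$ have distinct denominators, so $\mathsf L(\mathsf n(s))\supseteq\{\mathsf d(x):x\in\mathcal A(M),\ \mathsf n(x)=\mathsf n(s)\}$ is infinite, and $|\mathsf L(\mathsf n(s))|=\infty$.

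For part~(2) I would first reduce to a normal form. As $s$ is stable it has a factorization $z_1$ consisting of stable atoms; pick any $z_2\in\mathsf Z(s)$ with $z_2\ne z_1$. Deleting the atoms common to $z_1$ and $z_2$ writes $s=w+v$, where $w\in M$ is the sum of the deleted atoms and $v\in M$ carries two factorizations with disjoint supports, one of which (the surviving part of $z_1$) still consists of stable atoms; thus $v$ is a nonzero stable divisor of $s$. Since prepending a fixed factorization of $w$ to the factorizations of $v$ produces factorizations of $s$ realizing infinitely many lengths as soon as $|\mathsf L(v)|=\infty$, we may replace $s$ by $v$ and assume
\[
	s=n_1t_1+\cdots+n_mt_m=k_1b_1+\cdots+k_\ell b_\ell,
\]
where the $t_i$ are distinct stable atoms, the $b_j$ are distinct atoms, all $n_i,k_j\ge 1$, $m\ge 1$, and $t_i\ne b_j$ for all $i,j$.

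The key claim is that $\mathsf n(t_1)\mid s$. Write $p_i=\mathsf d(t_i)$ and $q_j=\mathsf d(b_j)$; since distinct atoms have distinct prime denominators and no $t_i$ equals a $b_j$, the primes $p_1,\dots,p_m,q_1,\dots,q_\ell$ are pairwise distinct. Rearranging the displayed identity as
\[
	0=\sum_{i=1}^m\frac{n_i\,\mathsf n(t_i)}{p_i}-\sum_{j=1}^\ell\frac{k_j\,\mathsf n(b_j)}{q_j}
\]
and applying Lemma~\ref{l:ppm} gives $p_i\mid n_i\,\mathsf n(t_i)$, hence $p_i\mid n_i$ because $\gcd(\mathsf n(t_i),p_i)=1$. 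In particular $n_1\ge\mathsf d(t_1)$, so splitting $\mathsf n(t_1)=\mathsf d(t_1)\,t_1$ off the $n_1$ copies of $t_1$ shows $\mathsf n(t_1)\mid s$. By part~(1), $|\mathsf L(\mathsf n(t_1))|=\infty$; prepending a fixed factorization of $s-\mathsf n(t_1)$ to each factorization of $\mathsf n(t_1)$ then produces infinitely many distinct lengths in $\mathsf L(s)$, so $|\mathsf L(s)|=\infty$.

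The part I expect to be most delicate is the normal-form reduction: one has to check that deleting the common atoms really leaves a nonzero stable divisor of $s$ with disjoint-support factorizations, and that a set of factorization lengths being infinite is inherited by multiples. Granting that, the divisibility $\mathsf n(t_1)\mid s$ falls straight out of Lemma~\ref{l:ppm}, and part~(1) closes the argument.
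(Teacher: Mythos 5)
Your proof is correct and takes essentially the paper's route: part~(1) is the same counting of lengths $\mathsf d(x)$ over the infinitely many atoms $x$ sharing the numerator $\mathsf n(s)$, and in part~(2) you cancel the common atoms and apply Lemma~\ref{l:ppm} (to the difference of the two disjoint-support factorizations) to force $\mathsf d(t_1) \mid n_1$, split off $\mathsf n(t_1)$, and invoke part~(1) --- exactly the paper's argument, which equates $p_j$-adic valuations to get integrality and then uses Lemma~\ref{l:ppm}. Note only that, just like the paper's own proof, your step ``distinct atoms have distinct prime denominators'' implicitly places you in the primary setting, which is where the lemma is actually applied.
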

	
	\begin{proof}
		First, fix a stable atom $s \in \mathcal A(M)$. There exists an infinite sequence of distinct integers $d_1, d_2, \ldots$ such that $\mathsf n(s)/d_i \in \mathcal A(M)$ for each $i \ge 1$. Writing 
		\[
			\mathsf n(s) = d_1 \frac k{d_1} = d_2 \frac k{d_2} = \cdots,
		\]
		we see that $\{d_i \mid i \ge 1\} \subset \mathsf L(\mathsf n(s))$, so $|\mathsf L(\mathsf n(s))| = \infty$.  This proves part~(1).
		
		Next, fix a stable element $s \in M$, and write $M = \langle a_n/p_n \mid n \in \nn \rangle$, where $\{a_n\}$ is a sequence of naturals and $\{p_n\}$ is an increasing sequence of primes such that $p_n \nmid a_n$ for every natural $n$. Suppose $s$ has two distinct factorizations
		\begin{equation} \label{eq:two factorizations of s}
			z = \sum_{j=1}^n \alpha_j \frac{a_j}{p_j} \ \text{ and } \ z' = \sum_{j=1}^n \beta_j \frac{a_j}{p_j},
		\end{equation}
		and assume that $a_j/p_j$ is stable whenever $\alpha_j > 0$. By omitting atoms appearing in both $z$ and $z'$ and replacing $s$ with an appropriate divisor, it suffices to assume $\alpha_j$ and $\beta_j$ are never simultaneously nonzero. Equating each $p_j$-adic valuation of both sums in \eqref{eq:two factorizations of s}, we conclude that $s \in \nn$, at which point Lemma~\ref{l:ppm} and part~(1) imply $|\mathsf L(s)| = \infty$.
	\end{proof}
	
	Theorem~\ref{t:ppmbf} proves that the phenomenon in Lemma~\ref{l:stable} (i.e.,\ stable atoms) is the only obstruction to finite length sets in a primary Puiseux monoid.
	
	\begin{theorem} \label{t:ppmbf}
		For a primary Puiseux monoid $M$, the following are equivalent:
		\begin{enumerate}
			\item $M$ is a FF-monoid;
			\item $M$ is a BF-monoid;
			\item Every $a \in \mathcal A(M)$ is unstable.
		\end{enumerate}
	\end{theorem}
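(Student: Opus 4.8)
The plan is to prove the cycle of implications $(1) \Rightarrow (2) \Rightarrow (3) \Rightarrow (1)$. First I would record the standing fact that a primary Puiseux monoid $M = \langle a_p/p \mid p \in P\rangle$ is atomic with $\mathcal{A}(M) = \{a_p/p \mid p \in P\}$: this is immediate from Lemma~\ref{l:ppm}, since if $a_q/q$ equaled a sum of generators of total multiplicity at least two, comparing $q$-adic valuations forces the coefficient of $a_q/q$ to be exactly $1$, leaving nothing else in the sum. The implication $(1) \Rightarrow (2)$ is then trivial: $\mathsf{L}(x)$ is the image of $\mathsf{Z}(x)$ under $z \mapsto |z|$, so $|\mathsf{Z}(x)| < \infty$ forces $|\mathsf{L}(x)| < \infty$. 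For $(2) \Rightarrow (3)$ I would argue contrapositively: if some atom $s = a_p/p$ is stable, then $a_p = \mathsf{n}(s) = p \cdot (a_p/p) \in M$, and Lemma~\ref{l:stable}(1) gives $|\mathsf{L}(a_p)| = \infty$, so $M$ is not a BF-monoid.

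The substance of the theorem is $(3) \Rightarrow (1)$. Assume every atom of $M$ is unstable and write $M = \langle a_n/p_n \mid n \in I\rangle$ with the $p_n$ distinct primes, $p_n \nmid a_n$, and $p_1 < p_2 < \cdots$ when $I$ is infinite. Since $\mathsf{n}(a_n/p_n) = a_n$, the unstability of every atom says exactly that $\{n \in I \mid a_n = v\}$ is finite for each value $v$, whence $F_k := \{n \in I \mid a_n \le k\}$ is finite for every $k \in \nn$. Now fix $x \in M^\bullet$ and consider an arbitrary factorization $x = \sum_n c_n (a_n/p_n)$ with $c_n \in \nn_0$ almost all zero. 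Taking $p_n$-adic valuations (the key computation, using $p_n \nmid a_n$ and distinctness of the $p_n$, in the spirit of Lemma~\ref{l:ppm}) yields three observations: (i) $\mathsf{v}_{p_n}(x) \ge -1$ for every $n$, and $\mathsf{v}_q(x) \ge 0$ for primes $q \notin \{p_n \mid n \in I\}$, so $\mathsf{d}(x)$ is squarefree and $B := \{n \in I \mid p_n \mid \mathsf{d}(x)\}$ is a finite subset of $I$ determined by $x$ alone; (ii) if $n \in B$ then in every factorization $c_n \ge 1$ and $p_n \nmid c_n$; (iii) if $n \notin B$ then in every factorization $c_n = 0$ or $p_n \mid c_n$, and in the latter case $c_n (a_n/p_n) = (c_n/p_n) a_n$ is an integer $\ge a_n \ge 1$.

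From (iii), $\sum_{n \notin B} c_n (a_n/p_n)$ is a non-negative integer bounded above by $x$, hence a sum of at most $\lfloor x \rfloor$ terms each of size $\ge 1$, and every index $n \notin B$ actually contributing to it satisfies $a_n \le \lfloor x \rfloor$, i.e.\ $n \in F_{\lfloor x \rfloor}$. Therefore every atom occurring in any factorization of $x$ lies in the finite set $B \cup F_{\lfloor x \rfloor}$, and every multiplicity $c_n$ occurring is bounded (by $x p_n / a_n$, since $c_n (a_n/p_n) \le x$). Consequently $\mathsf{Z}(x)$ embeds into a finite product of finite sets, so $|\mathsf{Z}(x)| < \infty$ and $M$ is an FF-monoid.

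I expect the routine parts to be the valuation bookkeeping and the final counting, and the only real subtlety to be isolating the intrinsic set $B$ of ``denominator-forcing'' primes: the point that $\mathsf{d}(x)$ is squarefree and that the atoms $a_n/p_n$ with $n \in B$ must appear in every factorization, after which every atom whose denominator does not divide $\mathsf{d}(x)$ contributes a genuine positive integer to $x$ and is therefore controlled by unstability. One should also check the degenerate case where $M$ is a numerical monoid, but there $I$ is finite, each $F_k$ is trivially finite, and the argument above goes through verbatim.
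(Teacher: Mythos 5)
Your proposal is correct and follows essentially the same route as the paper: the equivalences $(1)\Rightarrow(2)$ and $(2)\Rightarrow(3)$ are handled exactly as in the text (the latter via Lemma~\ref{l:stable}), and your proof of $(3)\Rightarrow(1)$ rests on the same two pillars as the paper's, namely $p$-adic valuations forcing $p_n \mid c_n$ (Lemma~\ref{l:ppm}) and unstability bounding the numerators $a_n$ by $x$, hence allowing only finitely many atoms. The only difference is cosmetic: where the paper reduces to integer elements with the remark that ``it suffices to show that each integer in $M$ has only finitely many factorizations,'' you treat an arbitrary $x$ directly by isolating the finite set $B$ of forced denominator primes, which in effect makes that implicit reduction explicit.
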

	
	\begin{proof}
		Clearly (1) implies (2), and (2) implies (3) follows from Lemma~\ref{l:stable}. Suppose every atom of $M$ is unstable. It suffices to show that each integer in $M$ has only finitely many factorizations. Fix $x \in \nn \cap M$ and a factorization
		\[
			\sum_{i=1}^m \alpha_i \frac{a_i}{p_i} \in \mathsf{Z}(x),
		\]
		with $a_i/p_i \in \mathcal A(M)$ and $p_i \nmid a_i$. By Lemma~\ref{l:ppm}, $p_i$ must divide $\alpha_i$ for $1 \le i \le m$, and thus each $a_i \le x$. Since $M$ has no stable atoms, only finitely many atoms of $M$ appear in factorizations of $x$, so $\mathsf{Z}(x)$ is finite.
	\end{proof}
	
	We conclude this section with several examples demonstrating that Theorem~\ref{t:ppmbf} does not easily generalize to more general classes of Puiseux monoids.
	
	\begin{example}\label{e:bfnotff}
		Let $p_1, p_2, \dots$ be an enumeration of the odd prime numbers, and consider the Puiseux monoid
		\[
			M = \bigg\langle \frac{\big\lfloor p_n/2 \big\rfloor}{p_n}, \frac{p_n - \big\lfloor p_n/2 \big\rfloor}{p_n} \ \bigg{|} \ n \in \nn \bigg\rangle.
		\]
		It is not hard to verify that $M$ is atomic and every generator given above is an atom.  Additionally, as $a \ge 1/3$ for every $a \in \mathcal{A}(M)$, it follows that $M$ is a BF-monoid, but
		$1 \in M$ has infinitely many factorizations of length 2, so
		$|\mathsf{Z}(1)| = \infty$. In particular, $M$ is a BF-monoid that fails to be an FF-monoid.
	\end{example}
	
	\begin{example}\label{e:unstablenotbf}
		Now take $\{p_n\}$ to be a strictly increasing sequence of primes such that $p_n > (n+1)^2$ for every natural $n$, and consider the Puiseux monoid
		\[
			M = \bigg\langle \frac n{p_n}, \frac{n+1}{p_n} \ \bigg{|} \ n \in \nn \bigg\rangle.
		\]
		One can check using elementary divisibility that the displayed generators are precisely the atoms of $M$ and, as a result, $M$ is atomic.  As $p_n > (n+1)^2$, there exist $a,b \in \nn_0$ such that $an + b(n+1) = p_n$. Therefore
		\[
			a+b > \frac{an + b(n+1)}{n+1} > n+1
		\]
		implies $|\mathsf{L}(1)| = \infty$. Hence, $M$ is an unstable Puiseux monoid that is not a BF-monoid.
	\end{example}
	
	\section{The set of elasticity of primary Puiseux monoids}
	\label{sec:ppmelasticity}
	
	Results in \cite[Section~2]{CHM06} state that the set of elasticities of a numerical monoid $S$ has only one limit point, namely $\rho(S)$. As the elasticity of a numerical monoid is accepted, $R(S)$ is closed, i.e., $\overline{R(S)} = R(S)$. In this section, we describe the topology of $R(M)$ for all primary Puiseux monoids $M$. In contrast to numerical monoids, many primary Puiseux monoids $M$ satisfy that $\overline{R(M)} = [1, \rho(M)]$. We start the section gathering a few technical results, and then we use them to describe $\overline{R(M)}$ in Theorems~\ref{t:finiteunstable} and~\ref{t:infiniteunstable}.
	
	\begin{example}\label{e:primarydense}
		Let $p_1, p_2, \dots$ be an increasing enumeration of the prime numbers, and let $M_1 = \langle n/p_n \mid n \in \nn \rangle$. Figure~\ref{f:primarydense} shows a plot depicting the elasticities of selected elements of $M_1$. The large red dots denote elasticities of integer elements $n \in M_1$, and the small blue dots trailing each red dot are for selected elements of the form $n + \epsilon$ for $\epsilon \in M_1$ uniquely factorable. Proposition~\ref{p:extraatom} ensures every elasticity achieved in $M_1$ occurs in this way.
	\end{example}
	
	\begin{example}\label{e:primarystable}
		Let $p_1, p_2, \dots$ be an increasing enumeration of the prime numbers, and consider the Puiseux monoid
		\[
			M_2 = \bigg\langle \frac n{p_n} \ \bigg{|} \ n \le 12 \bigg\rangle + \bigg\langle \frac{30}{p_n} \ \bigg{|} \ n > 12 \bigg\rangle = \bigg\langle \frac 12, \frac 23, \dots, \frac{12}{37} \bigg\rangle + \bigg\langle \frac{30}{41}, \frac{30}{43}, \dots \bigg\rangle.
		\]
		A plot depicting the elasticities of selected elements of $M_2$ is given in Figure~\ref{f:primarydense}. Since $|\mathsf L(30)| = \infty$, every subsequent integer has infinite elasticity. As such, every finite elasticity achieved in $M_2$ occurs as in the plot.
	\end{example}
	
	\begin{figure}
	\begin{center}
		\includegraphics[width=6.0in]{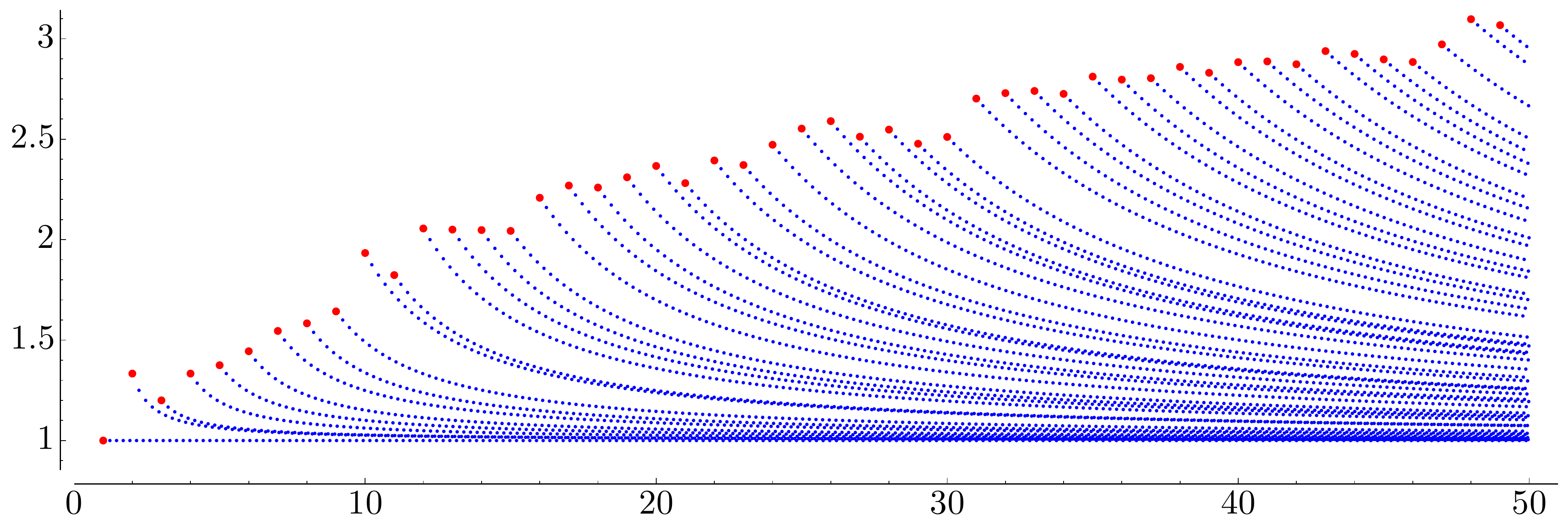} \\
		\includegraphics[width=6.0in]{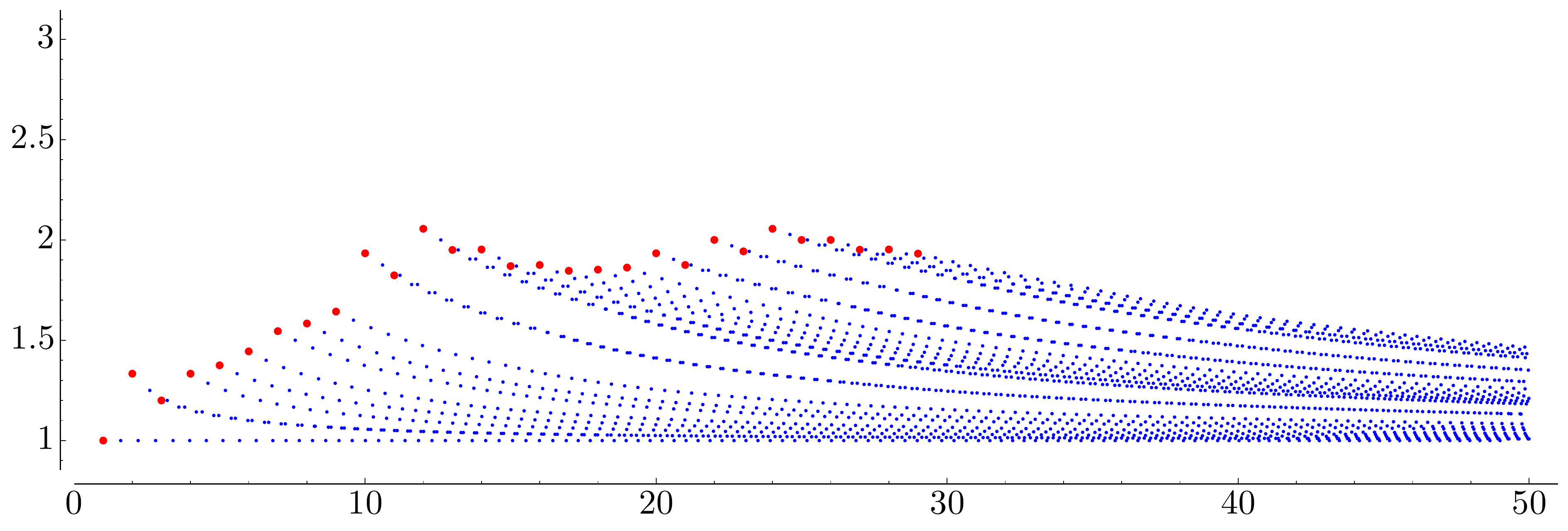}
	\end{center}
	\caption{A plot depicting the elasticities of elements of $M_1$ and $M_2$ from Examples~\ref{e:primarydense} and~\ref{e:primarystable}, respectively. In each, the large red dots denote elasticities of integer elements.}
	\label{f:primarydense}
	\end{figure}

	Proposition~\ref{p:extraatom} illustrates the main property of primary Puiseux monoids that we use to characterize their sets of elasticities (cf. \cite[Lemma~2.1]{BCCKW06}).
	\vspace{-2pt}
	\begin{proposition} \label{p:extraatom}
		Let $M$ be a primary Puiseux monoid with infinitely many atoms. For each $x \in M$, there exists $x' \in M$ with $\mathsf L(x') = \mathsf L(x) + 1$.
	\end{proposition}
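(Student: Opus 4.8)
The plan is to obtain $x'$ from $x$ by adjoining a single atom whose prime denominator is larger than every prime appearing in the denominator of $x$. Writing $M = \langle a_p/p \mid p \in P\rangle$ as in Definition~\ref{d:primary}, I would first record two elementary facts. First, $\mathcal A(M)$ is contained in the displayed generating set, and distinct generators have distinct (prime) reduced denominators; so the hypothesis that $\mathcal A(M)$ is infinite means there are atoms with arbitrarily large prime denominators. Second, the $q$-adic valuation $\mathsf v_q(x)$ is negative only for the finitely many primes $q$ dividing $\mathsf d(x)$. Combining these, I can fix a prime $q$ with $a_q/q \in \mathcal A(M)$ and $\mathsf v_q(x) \ge 0$; then I set $\alpha = a_q/q$ and $x' = x + \alpha \in M$.

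The crux of the argument will be to show that \emph{every} factorization of $x'$ uses the atom $\alpha$ at least once. Since $q \nmid a_q$, we have $\mathsf v_q(\alpha) = -1$, and together with $\mathsf v_q(x) \ge 0$ this forces $\mathsf v_q(x') = -1 < 0$. On the other hand, every atom of $M$ other than $\alpha$ is a generator $a_p/p$ with $p \ne q$, hence has nonnegative $q$-adic valuation; so in any factorization $\sum_p c_p (a_p/p) \in \mathsf Z(x')$, were $\alpha$ absent we would get $\mathsf v_q(x') \ge 0$, a contradiction. (This is precisely the kind of valuation bookkeeping packaged in Lemma~\ref{l:ppm}.) It follows that the map $\mathsf Z(x) \to \mathsf Z(x')$ sending $z$ to $z + \alpha$ is a length-increasing bijection: it is evidently injective and raises length by one, and it is surjective because any $w \in \mathsf Z(x')$ contains a copy of $\alpha$, so that $w - \alpha \in \mathsf Z(x)$ maps to $w$. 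Passing to lengths gives $\mathsf L(x') = \{\,n+1 : n \in \mathsf L(x)\,\} = \mathsf L(x)+1$, which is the claim.

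The one delicate point — and the only place the hypothesis of infinitely many atoms enters — is the choice of $q$: it must exceed the finitely many prime divisors of $\mathsf d(x)$ yet still serve as the denominator of a genuine atom. Everything else is a two-line $q$-adic computation. The degenerate case $x = 0$ is included automatically, since then $\mathsf v_q(0) = \infty$ for every prime $q$, any atom may be used, and $\mathsf L(\alpha) = \{1\} = \mathsf L(0)+1$.
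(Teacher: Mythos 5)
Your proof is correct and follows essentially the same route as the paper: adjoin one atom $\alpha = a_q/q$ whose prime $q$ does not occur in the denominator of $x$, use the $q$-adic valuation (as in Lemma~\ref{l:ppm}) to force $\alpha$ to appear in every factorization of $x' = x + \alpha$, and conclude $\mathsf L(x') = \mathsf L(x) + 1$. The only cosmetic difference is how the fresh atom is selected (a prime exceeding the divisors of $\mathsf d(x)$, versus the paper's choice of an atom absent from a fixed factorization of $x$), and you make explicit the length-shifting bijection $\mathsf Z(x) \to \mathsf Z(x')$ that the paper leaves implicit.
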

	
	\begin{proof}
		Fix a factorization $a_1 + \dots + a_r \in \mathsf{Z}(x)$. Since $M$ has infinitely many atoms, some atom $a = q/p \in \mathcal A(M)$ does not appear among the $a_i$. Let $x' = x + a$, and fix a factorization $a'_1 + \dots + a'_s \in \mathsf{Z}(x')$. Applying the $p$-adic valuation map to $a_1 + \dots + a_r + a = a'_1 + \dots + a'_s$,
		we obtain that $\mathsf d(a'_i) = p$ for some index $i$.  In particular, every factorization of $x'$ must have $a$ as a formal summand, so $\mathsf L(x') = \mathsf L(x) + 1$.  
	\end{proof}
	
	\begin{proposition} \label{p:stabledecomp}
		Fix a primary Puiseux monoid $M$ and an element $x \in M$.
		\begin{enumerate}
		\item There is at most one expression of the form $x = s + u$ where $s$ is stable and uniquely factorable, and $u$ is unstable.
		\vspace{2pt}
		\item If $\mathsf{L}(x)$ is finite, then there exist unique $s \in \mathcal{S}(M)$ and $u \in \mathcal{U}(M)$ such that $x = s + u$. Moreover, $s$ is uniquely factorable and $u$ is absolutely unstable.
		\end{enumerate}
	\end{proposition}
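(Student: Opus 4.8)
The plan is to derive both parts from one structural fact about uniquely factorable stable elements together with the valuation trick behind Lemma~\ref{l:ppm}. Fix the presentation $M = \langle a_n/p_n \mid n \in \nn\rangle$ with the $p_n$ distinct primes and $p_n \nmid a_n$; recall that then every atom equals some $a_n/p_n$ (so $\mathsf d(a_n/p_n) = p_n$) and each atom is either stable or unstable. I will use two observations. First, any relation $\sum_n c_n(a_n/p_n) = 0$ with $c_n \in \zz$ forces $p_n \mid c_n$ for all $n$: clear denominators and take $p_n$-adic valuations, exactly as in the proof of Lemma~\ref{l:ppm}; in particular $c_n \neq 0$ implies $|c_n| \ge p_n$. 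Second, call $(\star)$ the claim: \emph{if $t \in M$ is stable and uniquely factorable, then in its unique factorization each stable atom $a$ occurs with multiplicity $< \mathsf d(a)$}. To prove $(\star)$, suppose $a$ occurs with multiplicity $m \ge \mathsf d(a)$; since $a$ is stable there are infinitely many atoms with numerator $\mathsf n(a)$, so we may pick one, say $\mathsf n(a)/d$, with $d \ne \mathsf d(a)$. Replacing $\mathsf d(a)$ copies of $a$ by $d$ copies of $\mathsf n(a)/d$ (valid since $\mathsf d(a)\cdot a = \mathsf n(a) = d\cdot(\mathsf n(a)/d)$) produces a second factorization of $t$, a contradiction.

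For part~(1), suppose $x = s + u = s' + u'$ with $s, s'$ stable and uniquely factorable and $u, u'$ unstable. Write the unique factorizations of $s$ and $s'$ (supported on stable atoms) and factorizations of $u, u'$ supported on unstable atoms; since stable and unstable atoms are disjoint, subtracting the two induced factorizations of $x$ gives a relation $\sum_n c_n(a_n/p_n) = 0$ in which, at a stable index $n$, $c_n$ is the difference of the multiplicities of $a_n/p_n$ in $s$ and in $s'$. By the first observation, $c_n \ne 0$ would force $|c_n| \ge p_n$, but $(\star)$ bounds both multiplicities by $p_n - 1$, so $|c_n| < p_n$; hence $c_n = 0$ at every stable index, i.e.\ $s = s'$, whence $u = x - s = x - s' = u'$.

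For part~(2), suppose $\mathsf L(x)$ is finite. Splitting the atoms of any factorization of $x$ (one exists by atomicity) into a stable part $s$ and an unstable part $u$ gives $x = s + u$ with $s \in \mathcal S(M)$, $u \in \mathcal U(M)$; moreover, since every factorization of $s$ extends to one of $x$ by appending a fixed factorization of $u$, $\mathsf L(s)$ is finite, so $s$ is uniquely factorable by Lemma~\ref{l:stable}(2). The same reasoning shows the stable part of \emph{any} $\mathcal S$--$\mathcal U$ decomposition of $x$ is uniquely factorable, so part~(1) yields uniqueness of the decomposition. Finally I would show $u$ is absolutely unstable. If not, $u$ has a nonzero stable divisor $t$; since $t \mid u \mid x$, $\mathsf L(t)$ is finite, so $t$ is uniquely factorable, say $t = \sum_k \mu_k b_k$ with distinct stable atoms $b_k$ and $1 \le \mu_k < \mathsf d(b_k)$ by $(\star)$. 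Write $u = t + w$, fix a factorization of $w$, and compare the resulting factorization of $u$ with one supported on unstable atoms: by the first observation the multiplicity of $b_k$ in the former — which is $\mu_k$ plus the multiplicity of $b_k$ in $w$, hence positive — must be divisible by $\mathsf d(b_k)$, so it is $\ge \mathsf d(b_k)$. Thus $b_k$ occurs at least $\mathsf d(b_k)$ times in a factorization of $u$, so $\mathsf n(b_k) = \mathsf d(b_k)\,b_k$ divides $u$ and hence $x$; but $|\mathsf L(\mathsf n(b_k))| = \infty$ by Lemma~\ref{l:stable}(1), forcing $|\mathsf L(x)| = \infty$, a contradiction.

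I expect the last step of part~(2) to be the main obstacle: one must upgrade the bare existence of a stable divisor of $u$ into the much stronger statement that some stable atom occurs with multiplicity at least its denominator in an honest factorization of $u$. This is exactly where $(\star)$ is indispensable — it is what prevents $\mu_k$ plus the multiplicity of $b_k$ in $w$ from vanishing modulo $\mathsf d(b_k)$ — and it is also why the hypothesis ``$\mathsf L(x)$ finite'' must be propagated down the chain $t \mid u \mid x$ to force unique factorability of the relevant stable pieces via Lemma~\ref{l:stable}.
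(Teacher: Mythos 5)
Your proof is correct and follows essentially the same route as the paper: both parts rest on comparing two factorizations via the $p$-adic valuation argument of Lemma~\ref{l:ppm} together with Lemma~\ref{l:stable}, with existence and uniqueness in part~(2) obtained exactly as the paper does. The differences are minor — your multiplicity bound $(\star)$ turns part~(1) into a direct proof that $s = s'$ where the paper reaches a contradiction via $|\mathsf{L}(s)| = \infty$ after cancelling common divisors, and in part~(2) the paper deduces absolute instability immediately from uniqueness of the decomposition, whereas you rederive it by showing $\mathsf{n}(b_k) \mid x$; note that this last step only uses $\mu_k \ge 1$, so $(\star)$ is genuinely needed in part~(1) but not there, contrary to your closing remark.
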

	
	\begin{proof}
		First, suppose by contradiction that $x = s + u = s' + u'$, where $u,u' \in \mathcal{U}(M)$ and $s,s' \in \mathcal{S}(M)$ are distinct elements with unique factorizations
		\[
			\sum_{i=1}^n \alpha_i \frac{a_i}{p_i} \in \mathsf{Z}(s) \  \text{ and } \  \sum_{i=1}^n \alpha'_i \frac{a_i}{p_i} \in \mathsf{Z}(s').
		\]
		We can assume $s$ and $s'$ have no common divisors in $M$. Since $u$ and $u'$ can be factored using only unstable atoms, by applying each $p_j$-adic valuation to both expressions for $x$, we see that $s \in \nn$, at which point Lemmas~\ref{l:ppm} and~\ref{l:stable} imply $|\mathsf L(s)| = \infty$, a contradiction.
		
		Next, fix $s \in \mathcal{S}(M)$ and $u \in \mathcal{U}(M)$ such that $x = s + u$ has $\mathsf{L}(x)$ finite. Because $\mathsf{L}(s)$ is also finite, Lemma~\ref{l:stable} forces $s$ to be uniquely factorable. Then by the above argument, the given decomposition of $x$ must be unique, which in turn immediately implies that $u$ has to be absolutely unstable.
	\end{proof}
	
	Let us begin the study of $\overline{R(M)}$ for those primary Puiseux monoids $M$ having finitely many unstable atoms.
	
	\begin{theorem}\label{t:finiteunstable}
		Fix a non-finitely generated primary Puiseux monoid $M$ with finitely many unstable atoms. If $M$ contains an absolutely unstable element $u$ with $|\mathsf L(u)| > 1$, then $1$ is the only limit point of $R(M)$. Otherwise, $R(M) = \{1, \infty\}$.
	\end{theorem}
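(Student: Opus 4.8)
The plan is to reduce everything to the structure of the unstable submonoid $\mathcal{U}(M)$ (which is finitely generated, hence isomorphic to a numerical monoid) together with Proposition~\ref{p:stabledecomp}. Write $M = \langle a_n/p_n : n \in \nn \rangle$ with $p_1 < p_2 < \cdots$ primes and $p_n \nmid a_n$; since $M$ is not finitely generated it has infinitely many atoms, and since only finitely many are unstable there are infinitely many stable atoms, so we may fix a stable atom $b_0/q_0$. As $\mathcal{U}(M)$ is finitely generated (the case $\mathcal{U}(M) = \{0\}$ being trivial, assume it is nonzero), it has the form $\mathcal{U}(M) = e\nn_0 \setminus eG$ for some $e \in \qq_{>0}$ and finite $G \subseteq \nn$; in particular $\mathcal{U}(M)$ is a BF-monoid.

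The first, and main, step is to show that $M$ has only finitely many absolutely unstable elements. Any absolutely unstable $u$ lies in $\mathcal{U}(M)$, since a stable atom appearing in a factorization of $u$ would be a nonzero stable divisor. Moreover, for every $m \ge 1$ we must have $u - mb_0 \notin M$: otherwise $mb_0 = mq_0 \cdot (b_0/q_0)$, together with a factorization of $u - mb_0$, would give a factorization of $u$ having the stable atom $b_0/q_0$ as a summand, contradicting absolute instability. Choosing $m = d$ with $db_0 \in e\nn_0$ (take $d$ to be the denominator of $b_0/e$), we have either $u < db_0$, or $u \ge db_0$ and then $u - db_0 \in e\nn_0 \setminus M \subseteq eG$, whence $u \le db_0 + e\max G$; in either case $u$ is bounded above, and since $\mathcal{U}(M) \subseteq e\nn_0$ is discrete, the set of absolutely unstable elements is finite.

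Next I would record a length-set formula: if $\mathsf{L}(x)$ is finite and $x = s + u$ is the decomposition from Proposition~\ref{p:stabledecomp}(2), with $s$ uniquely factorable of length $k$ and $u$ absolutely unstable, then $\mathsf{L}(x) = k + \mathsf{L}(u)$, hence $\rho(x) = (k + \max \mathsf{L}(u))/(k + \min \mathsf{L}(u))$. The inclusion $k + \mathsf{L}(u) \subseteq \mathsf{L}(x)$ is clear; for the reverse, split any factorization of $x$ into its stable and unstable atoms, obtaining a decomposition $x = s' + u'$ with $s' \in \mathcal{S}(M)$ and $u' \in \mathcal{U}(M)$, which by the uniqueness in Proposition~\ref{p:stabledecomp}(2) must equal $s + u$; so the stable part of the factorization is the (unique) factorization of $s$, and the unstable part is a factorization of $u$.

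With these in hand the theorem follows. If there is an absolutely unstable $u$ with $|\mathsf{L}(u)| > 1$, then iterating Proposition~\ref{p:extraatom} $N$ times on $u$ yields $x_N$ with $\mathsf{L}(x_N) = \mathsf{L}(u) + N$, so, writing $\ell = \min \mathsf{L}(u) < L = \max \mathsf{L}(u)$, we get $\rho(x_N) = (L+N)/(\ell+N) \to 1$ with every $\rho(x_N) > 1$, so $1$ is a limit point of $R(M)$. On the other hand, letting $u_1, \dots, u_t$ be the finitely many absolutely unstable elements with non-singleton length set, the formula above gives $R(M) \cap (1,\infty) \subseteq \bigcup_{j=1}^t \{ (k + \max \mathsf{L}(u_j))/(k + \min \mathsf{L}(u_j)) : k \ge 0 \}$, a finite union of strictly decreasing sequences converging to $1$ and bounded above by $\max_j \rho(u_j) < \infty$; hence $1$ is the only limit point of $R(M)$. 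If instead every absolutely unstable element is uniquely factorable, the length-set formula forces $\rho(x) = 1$ whenever $\mathsf{L}(x)$ is finite and $\rho(x) = \infty$ otherwise, while $1 \in R(M)$ (any atom) and $\infty \in R(M)$ (Lemma~\ref{l:stable}(1) applied to a stable atom), so $R(M) = \{1,\infty\}$. The main obstacle is the finiteness of the set of absolutely unstable elements in the first step; once Proposition~\ref{p:stabledecomp} is available, the rest is bookkeeping.
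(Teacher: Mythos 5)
Your proof is correct, and its skeleton matches the paper's: decompose each element with finite length set as $x = s + u$ via Proposition~\ref{p:stabledecomp}, show there are only finitely many absolutely unstable elements, and trap $R(M)$ inside $\{1,\infty\}$ together with finitely many sequences $(k+\max\mathsf L(u_j))/(k+\min\mathsf L(u_j))$. You deviate in two worthwhile ways. Where the paper realizes the limit point $1$ by constructing, for each $k$, a uniquely factorable stable element $s_k$ with $\mathsf L(s_k)=\{k\}$ (a suitable multiple of a stable atom of large denominator) and then forming $s_k+u$, you iterate Proposition~\ref{p:extraatom} on $u$ directly; this is the same device the paper uses later in Theorem~\ref{t:infiniteunstable}, and it lets you skip the $s_k$ construction. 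More substantively, the paper merely asserts that all but finitely many unstable elements have stable divisors (``since $\mathcal U(M)$ is finitely generated''), whereas you actually prove this finiteness: writing $\mathcal U(M)=e(\nn_0\setminus G)$ with $G$ finite and fixing a stable atom $b_0/q_0$, you observe that $u-mb_0\notin M$ for every $m\ge 1$ when $u$ is absolutely unstable, so either $u<db_0$ or $u-db_0\in e\nn_0\setminus M\subseteq eG$, which bounds $u$ inside the discrete set $e\nn_0$. Your explicit length-set formula $\mathsf L(x)=k+\mathsf L(u)$, obtained by splitting a factorization into its stable and unstable parts and invoking the uniqueness in Proposition~\ref{p:stabledecomp}, is exactly the computation the paper performs implicitly. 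Two trivial tidy-ups: when $G=\emptyset$ the case $u\ge db_0$ is vacuous, so the bound still stands; and the list $u_1,\dots,u_t$ should consist of absolutely unstable elements with finite non-singleton length sets so that $\max\mathsf L(u_j)$ is defined (automatic, since such elements factor only over the finitely many unstable atoms). Neither affects correctness.
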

	
	\begin{proof}
		First, we claim that for every $k \in \nn$ there exists a uniquely factorable stable element $s_k$ with $\mathsf L(s_k) = \{k\}$.  Since $\mathcal{S}(M)$ is not finitely generated, there is an infinite sequence $\{a_n\}$ of stable atoms with $\mathsf{d}(a_n) < \mathsf{d}(a_{n+1})$ for each natural $n$. It is not hard to check that $ma_i$ is uniquely factorable for every $m < \mathsf{d}(a_i)$. Hence choosing $n$ large enough such that $k < \mathsf d(a_n)$, one finds that $s_k = k a_n$ satisfies $\mathsf{L}(s_k) = \{k\}$, which proves the claim.
		
		Now, given an absolutely unstable element $u \in M$ with $|\mathsf L(u)| > 1$, consider the elements $x = s_k + u$. Since $\mathcal{U}(M)$ is finitely generated, it follows that $|\mathsf L(u)| < \infty$. Take $\ell = \min \mathsf{L}(u)$ and $L = \max \mathsf{L}(u)$. Since $s_k$ is uniquely factorable, Proposition~\ref{p:stabledecomp} guarantees that the given decomposition of $x$ in $\mathcal{S}(M) + \mathcal{U}(M)$ is unique, and as such, $\min \mathsf{L}(x) = \ell + k$ and $\max \mathsf{L}(x) = L + k$.  In particular,
		\[
			\bigg\{\frac{L + k}{\ell + k} \ \bigg{|} \ k \in \nn \bigg\} \subseteq R(M).
		\]
		On the other hand, since $\mathcal{U}(M)$ is finitely generated, all but finitely many unstable elements are divisible by stable elements; let them be $U = \{u_1, \dots, u_n\}$.  Lemma~\ref{l:stable} and Proposition~\ref{p:stabledecomp} ensure that the only elements having finite elasticity are of the form $u_i + s$ for some factorial stable element $s$.  This, along with the fact that $\mathsf{L}(\mathsf{n}(s))$ is infinite for each $s \in \mathcal{S}(M)$, yields
		\[
			R(M) = \{1,\infty\} \cup \bigg\{\frac{\max \mathsf{L}(u_i) + k}{\min \mathsf{L}(u_i) + k} \ \bigg{|} \ 1 \le i \le n \ \text{ and } \ k \in \nn \bigg\}.
		\]
		At this point, both claims immediately follow.  
	\end{proof}
	
	Theorem~\ref{t:infiniteunstable} implies that, aside from numerical monoids and the primary Puiseux monoids covered by Theorem~\ref{t:finiteunstable}, the set $R(M)$ of elasticities of a primary Puiseux monoid $M$ is dense in $[1, \rho(M)]$. Let us collect one last technical result central to the proof of Theorem~\ref{t:infiniteunstable}.
	
	\begin{lemma}\label{l:density}
		Let $\{a_n\}$ and $\{b_n\}$ be two sequences in $\nn$ such that $\{a_n/b_n\}$ converges to $\ell \in \rr_{>1} \cup \{\infty\}$, and $a_n/b_n < \ell$ for each natural $n$. Then the set
		\[
			S = \bigg\{\frac{a_n + k}{b_n + k} \ \bigg{|} \ n,k \in \nn \bigg\}
		\]
		is dense in $[1,\ell]$.
	\end{lemma}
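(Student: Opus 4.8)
The plan is to exploit the monotonicity of the auxiliary functions $f_n(k) = (a_n+k)/(b_n+k)$ and show that, as $n$ grows, the values $\{f_n(k) : k \in \nn\}$ fill out $[1,\ell)$ with gaps tending to $0$. Since $a_n/b_n \to \ell > 1$, we have $a_n > b_n$ for every $n$ past some index $N$, so it suffices to prove that $\{f_n(k) : n \ge N,\ k \in \nn\}$ is dense in $[1,\ell]$; and for $n \ge N$ the function $f_n$ is strictly decreasing in $k \ge 0$, with $f_n(0) = a_n/b_n$ and $f_n(k) \downarrow 1$.

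Next I would record two preliminary facts: $a_n \to \infty$ and $(a_n+1)/(b_n+1) \to \ell$. For the first, if some subsequence of $(a_n)$ were bounded then, using $b_n \ge 1$ together with $a_n/b_n \to \ell$, the case $\ell < \infty$ would force the corresponding $b_n$ to be bounded as well, so $a_n/b_n$ would take only finitely many values along that subsequence and hence be eventually equal to $\ell$, contradicting $a_n/b_n < \ell$; the case $\ell = \infty$ contradicts $a_n/b_n \to \infty$ directly. Granting $a_n \to \infty$: if $\ell = \infty$ then $(a_n+1)/(b_n+1) \ge a_n/(2b_n) \to \infty$ (using $b_n \ge 1$), while if $\ell < \infty$ then $b_n \to \infty$ (else $a_n$ would be bounded), whence $(a_n+1)/(b_n+1) \to \ell$.

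The heart of the argument is a single estimate. Fix $t \in (1,\ell)$ and $\epsilon > 0$, and choose $n$ large enough that $a_n > b_n$, $(a_n+1)/(b_n+1) > t$, and $\frac{t-1}{a_n/t-1} < \epsilon$ (possible by the preliminaries). Since $f_n(k) \downarrow 1 < t$, there is a least $k_n \in \nn$ with $f_n(k_n) \le t$; since $f_n(1) > t$, necessarily $k_n \ge 2$, and $f_n(k_n-1) > t$ by minimality. A direct computation gives $f_n(k-1) - f_n(k) = \frac{a_n-b_n}{(b_n+k-1)(b_n+k)}$; combining this with $a_n - b_n = (a_n+k_n) - (b_n+k_n) = f_n(k_n)(b_n+k_n) - (b_n+k_n) \le (t-1)(b_n+k_n)$ and $b_n + k_n \ge (a_n+k_n)/t \ge a_n/t$ yields
\[
	0 \le t - f_n(k_n) \le f_n(k_n-1) - f_n(k_n) \le \frac{t-1}{b_n+k_n-1} \le \frac{t-1}{a_n/t-1} < \epsilon .
\]
Hence $f_n(k_n) = (a_n+k_n)/(b_n+k_n) \in S$ lies within $\epsilon$ of $t$, so $(1,\ell) \subseteq \overline{S}$. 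Together with $1 \in \overline{S}$ (since $f_n(k) \downarrow 1$) and $\ell \in \overline{S}$ (take $t_j \uparrow \ell$ when $\ell < \infty$, or use $(a_n+1)/(b_n+1) \to \infty$ when $\ell = \infty$), this gives $\overline{S} \supseteq [1,\ell]$, i.e.\ $S$ is dense in $[1,\ell]$.

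The only genuinely delicate point is the bookkeeping around whether $b_n \to \infty$, which can fail when $\ell = \infty$ (e.g.\ $a_n = n$, $b_n = 1$); this is what forces the small case split in the preliminary facts. Using the uniform lower bound $b_n + k_n \ge a_n/t$ — valid as soon as $f_n(k_n) \le t$ — instead of a bound phrased in terms of $b_n$ sidesteps the issue in the main estimate, after which everything is routine manipulation of the monotone functions $f_n$.
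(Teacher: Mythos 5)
Your proof is correct. It rests on the same basic mechanism as the paper's --- fix a target $t$, choose $n$ so large that the values $\frac{a_n+k}{b_n+k}$, $k=1,2,\dots$, start above $t$ and have consecutive gaps smaller than $\epsilon$, then pick $k$ --- but the execution is genuinely different. The paper rewrites $\frac{a_n+k}{b_n+k} = 1 + \frac{c_n}{b_n+k}$ with $c_n = a_n - b_n$, passes to reciprocals, and reduces the lemma to density of $\big\{\frac{b_n}{c_n} + \frac{k}{c_n} \mid n,k \in \nn\big\}$ in a ray, which requires a subsequence extraction (to get $b_n \to \infty$ when $\ell < \infty$) and a case split between $\ell = \infty$ and $\ell$ finite in the main argument. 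You instead work directly with the decreasing functions $f_n(k)$ and bound the jump at the first crossing index $k_n$ via $f_n(k_n-1)-f_n(k_n) = \frac{a_n-b_n}{(b_n+k_n-1)(b_n+k_n)} \le \frac{t-1}{a_n/t-1}$, which uses only $a_n \to \infty$; this keeps the main estimate uniform in $\ell$ and confines the finite/infinite dichotomy to the two preliminary facts ($a_n \to \infty$ and $(a_n+1)/(b_n+1) \to \ell$), both of which you justify correctly --- in particular, your use of the strict hypothesis $a_n/b_n < \ell$ to rule out bounded $a_n$ when $\ell < \infty$ is exactly where that hypothesis is needed (without it the lemma is false, e.g.\ constant $a_n/b_n = \ell$). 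The endpoint bookkeeping ($1, \ell \in \overline{S}$) and the verification that $k_n \ge 2$ (so that $k_n-1 \in \nn$ and $f_n(k_n)\in S$) are also handled properly, so the argument is complete; what it buys over the paper's route is a single unified estimate at the cost of slightly more explicit inequality manipulation.
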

	
	\begin{proof}
		Since $\ell > 1$, we can assume without loss of generality that $a_n > b_n$ for $n \in \nn$. Replacing $\{a_n/b_n\}$ by one of its subsequences if necessary, we can assume that either $\ell = \infty$ or $\lim b_n = \infty$. Then setting $c_n = a_n - b_n$ for each natural $n$, one finds that $\lim c_n = \infty$. Suppose first that $\ell = \infty$. In this case, $\lim c_n/b_n = \infty$. Since
		\[
			\frac{a_n + k}{b_n + k} = 1 + \frac{c_n}{b_n + k},
		\]
		it suffices to show that the set 
		\[
			T = \bigg\{ \frac {b_n}{c_n} + \frac k{c_n} \ \bigg{|} \ n, k \in \nn \bigg\}
		\]
		is dense in $\rr_{\ge 0}$.  Indeed, we can find an element of $T$ within $\epsilon > 0$ of any $q \in \rr_{> 0}$ by first choosing $n$ large enough that $b_n/c_n < q$ and $1/c_n < \epsilon$, and then choosing $k$ so that $b_n/c_n + k/c_n$ is within $\epsilon$ of $q$.  
		
		Now suppose that $\ell$ is finite.  Since $\lim c_n/b_n = \ell - 1$, it suffices to show the set $T$ defined above is dense in the ray $[\frac{1}{\ell - 1}, \infty)$.  This amounts to showing that
		\[
			X = \big\{x_n + k/c_n \ \big{|} \ n \in \nn \big\}
		\]
		is dense in $\rr_{\ge 0}$, where $x_n = b_n/c_n - (\ell - 1)^{-1}$. Since $\lim x_n = 0$, we proceed as in the previous paragraph to check that $X$ is dense in $\rr_{\ge 0}$. This completes the proof.  
	\end{proof}
	
	\begin{theorem}\label{t:infiniteunstable}
		For a primary Puiseux monoid $M$ such that $\mathcal{U}(M)$ is not finitely generated, the following statements hold.
		\begin{enumerate}
			\item If $M = \mathcal U(M)$, then $R(M)$ is dense in $[1, \rho(M)]$.
			\item If $M \supsetneq \mathcal U(M)$, then $R(M)$ is dense in $[1,\rho(\mathcal{U}(M))] \cup \{\infty\}$.
		\end{enumerate}
	\end{theorem}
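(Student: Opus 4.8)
The plan is to handle the two parts in turn; in each, after exhibiting a good sequence of elements, the density comes from Lemma~\ref{l:density}, with Proposition~\ref{p:extraatom} (respectively, the uniquely factorable stable elements of every length from the proof of Theorem~\ref{t:finiteunstable}) supplying the ``$+k$'' shifts that interpolate the remaining elasticities. Throughout I will use the bounds $\min\mathsf L(x)\ge x/\sup\mathcal A(M)$ and $\max\mathsf L(x)\le x/\inf\mathcal A(M)$ coming from the proof of Theorem~\ref{t:pmelast}; note also that the relevant monoid has $\rho>1$ in both parts, since a primary Puiseux monoid with all atoms equal is finitely generated.

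For part~(1), Theorem~\ref{t:ppmbf} shows $M$ is a BF-monoid, so every element has finite elasticity, and I construct $\{x_n\}\subseteq M$ with $\rho(x_n)\to\rho(M)$ and $\rho(x_n)<\rho(M)$ for all $n$, splitting into three cases. If $0$ is a limit point of $\mathcal A(M)$ (equivalently $\rho(M)=\infty$, by Theorem~\ref{t:pmelast}), take atoms $a_n\downarrow 0$ and set $x_n=\mathsf n(a_n)\mathsf n(a_1)$ as in the proof of Theorem~\ref{t:pmelast}, so $\rho(x_n)\ge a_1/a_n\to\infty$. If $\rho(M)=\mathsf A/\mathsf a<\infty$ (with $\mathsf a=\inf\mathcal A(M)$ and $\mathsf A=\sup\mathcal A(M)$) is \emph{not} accepted, take atoms $b_n\to\mathsf a$, $c_n\to\mathsf A$ and set $x_n=\mathsf n(b_n)\mathsf n(c_n)$, so $\rho(x_n)\ge c_n/b_n\to\mathsf A/\mathsf a$ while $\rho(x_n)<\rho(M)$ by non-acceptance. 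If $\rho(M)=\mathsf A/\mathsf a$ is accepted, Theorem~\ref{t:pmaccept} gives $\mathsf a=\min\mathcal A(M)$ and $\mathsf A=\max\mathcal A(M)$, with $\mathsf a<\mathsf A$ since $M$ is not finitely generated; here I take $x_j=j\,\mathsf n(\mathsf a)\mathsf n(\mathsf A)+\mathsf a$. Since $x_j/\mathsf a=j\,\mathsf n(\mathsf A)\mathsf d(\mathsf a)+1$ is an integer, the all-$\mathsf a$ factorization is optimal, so $\max\mathsf L(x_j)=j\,\mathsf n(\mathsf A)\mathsf d(\mathsf a)+1$; and since $x_j/\mathsf A=j\,\mathsf n(\mathsf a)\mathsf d(\mathsf A)+\mathsf a/\mathsf A$ with $0<\mathsf a/\mathsf A<1$, the bound $\min\mathsf L(x_j)\ge x_j/\mathsf A$ and integrality force $\min\mathsf L(x_j)=j\,\mathsf n(\mathsf a)\mathsf d(\mathsf A)+1$, attained by $j\,\mathsf n(\mathsf a)\mathsf d(\mathsf A)$ copies of $\mathsf A$ and one copy of $\mathsf a$. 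Thus $\rho(x_j)=\dfrac{j\,\mathsf n(\mathsf A)\mathsf d(\mathsf a)+1}{j\,\mathsf n(\mathsf a)\mathsf d(\mathsf A)+1}\nearrow\mathsf A/\mathsf a$. In every case, writing $A_n=\max\mathsf L(x_n)$ and $B_n=\min\mathsf L(x_n)$, Proposition~\ref{p:extraatom} yields an element with length set $\mathsf L(x_n)+k$ for each $k\ge 0$, so $\{(A_n+k)/(B_n+k):n,k\ge 0\}\subseteq R(M)$; since $A_n/B_n\to\rho(M)$ from below, Lemma~\ref{l:density} shows this set is dense in $[1,\rho(M)]$.

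For part~(2), first observe that $\mathcal U(M)$ is a non-finitely-generated primary Puiseux monoid with $\mathcal U(\mathcal U(M))=\mathcal U(M)$, so part~(1) applies to it. Because $M\supsetneq\mathcal U(M)$ there is a stable atom, and exactly as in the proof of Theorem~\ref{t:finiteunstable} there is, for each $k\in\nn$, a uniquely factorable stable element $s_k$ with $\mathsf L(s_k)=\{k\}$; in particular $\infty\in R(M)$ by Lemma~\ref{l:stable}. If $u\in M$ is absolutely unstable, then every factorization of $u$ uses only unstable atoms, so $\rho_M(u)=\rho_{\mathcal U(M)}(u)$, and by Proposition~\ref{p:stabledecomp}, $\mathsf L(s_k+u)=k+\mathsf L(u)$, whence $(k+\max\mathsf L(u))/(k+\min\mathsf L(u))\in R(M)$ for every $k\ge 0$. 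The same proposition shows every finite elasticity of $M$ is at most $\rho(\mathcal U(M))$, so $R(M)\subseteq[1,\rho(\mathcal U(M))]\cup\{\infty\}$, and it suffices to produce absolutely unstable $u_n\in M$ with $\rho_{\mathcal U(M)}(u_n)\to\rho(\mathcal U(M))$ from below: these are obtained by running the constructions of part~(1) inside $\mathcal U(M)$, after checking — via Lemma~\ref{l:ppm} and the fact that distinct atoms of a primary Puiseux monoid have distinct prime denominators — that the resulting elements have no nonzero stable divisor in $M$. Putting $A_n=\max\mathsf L(u_n)$, $B_n=\min\mathsf L(u_n)$ and letting $k$ run over $\nn$, Lemma~\ref{l:density} then gives that $R(M)$ is dense in $[1,\rho(\mathcal U(M))]\cup\{\infty\}$.

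The main difficulty is the accepted case of part~(1): the sequences used in Theorem~\ref{t:pmelast} become eventually constant with elasticity \emph{equal} to $\rho(M)$, so Lemma~\ref{l:density} cannot be applied to them; the perturbed elements $x_j=j\,\mathsf n(\mathsf a)\mathsf n(\mathsf A)+\mathsf a$ repair this precisely because adjoining a single copy of $\mathsf a$ keeps $x_j$ a multiple of $\mathsf a$ but not of $\mathsf A$, forcing $\max\mathsf L$ and $\min\mathsf L$ to the exact values above. A secondary point, in part~(2), is verifying that the density-witnessing elements of $\mathcal U(M)$ can be chosen absolutely unstable in $M$, so that passing from $\mathcal U(M)$ to $M$ leaves their elasticity unchanged.
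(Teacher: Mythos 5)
Your part~(1) is essentially the paper's argument: reduce to $M=\mathcal U(M)$, produce a sequence with $\rho(x_n)\to\rho(M)$ strictly from below, shift with Proposition~\ref{p:extraatom}, and finish with Lemma~\ref{l:density}. Your accepted-case witness $x_j=j\,\mathsf n(\mathsf a)\mathsf n(\mathsf A)+\mathsf a$, with the exact computation of $\max\mathsf L$ and $\min\mathsf L$, is a perfectly good (indeed more explicit) variant of the paper's $n\,\mathsf n(a)\mathsf n(A)+a'$. One slip: the parenthetical ``$0$ is a limit point of $\mathcal A(M)$, equivalently $\rho(M)=\infty$'' is wrong --- by Theorem~\ref{t:pmelast}, $\rho(M)=\infty$ also happens when $\inf\mathcal A(M)>0$ but $\sup\mathcal A(M)=\infty$, a case your three-way split does not literally cover. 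It is harmless (the construction with atoms $c_n\to\infty$ works verbatim, and $\rho(x_n)<\infty$ automatically since $M$ is a BF-monoid by Theorem~\ref{t:ppmbf}), but it should be stated.

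The genuine gap is in part~(2), precisely at the step you call secondary: the claim that the part-(1) witnesses, run inside $\mathcal U(M)$, ``have no nonzero stable divisor in $M$'' is false in general, and no appeal to Lemma~\ref{l:ppm} or to distinct prime denominators fixes it. Those witnesses are (up to shifts) integers such as $\mathsf n(b_n)\mathsf n(c_n)$, and once $M$ has a stable atom $a_q/q$, the integer $a_q=q\cdot(a_q/q)$ is a nonzero stable element of $M$ which typically divides all large integers of $M$: for instance, if $3/q\in\mathcal A(M)$ for infinitely many $q$ and $2\in M$, then every integer $x\ge 5$ of $M$ has the stable divisor $3$, whence $|\mathsf L_M(x)|=\infty$ by Lemma~\ref{l:stable}. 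For such $M$ your witnesses have $M$-elasticity $\infty$ rather than approximately $\rho(\mathcal U(M))$, and $\mathsf L_M(s_k+u_n)$ is not $k+\mathsf L(u_n)$, so your proof does not show that the dense family $(k+A_n)/(k+B_n)$ is realized in $R(M)$. The paper takes a different route here: it does not require its specific witnesses to be absolutely unstable, but instead establishes the identity $R(M)=R(\mathcal U(M))\cup\{\infty\}$ via Proposition~\ref{p:stabledecomp} (in both directions) together with Proposition~\ref{p:extraatom} to match length sets, and only then runs the whole density argument inside $\mathcal U(M)$. What your argument is missing is exactly such a transfer statement --- that each elasticity $(k+A_n)/(k+B_n)$ arising in $\mathcal U(M)$ is attained by \emph{some} element of $M$ with finite $M$-length set (note that even the paper's inclusion $R(\mathcal U(M))\subseteq R(M)$ is argued only for elements whose $M$-length set is finite, so this transfer is where the real work lies) --- and not the false claim that your particular integer witnesses are absolutely unstable in $M$.
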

	
	\begin{proof}
		We begin by showing that if $\mathcal S(M) \ne \{0\}$, then 
		\begin{equation} \label{eq:set of elasticities equality}
			R(M) = R(\mathcal{U}(M)) \cup \{\infty\}.
		\end{equation}
		By assumption, Theorem~\ref{t:ppmbf} implies $\infty \in R(M)$.  Also, any $u \in \mathcal U(M)$ with $\mathsf L_M(u)$ finite is absolutely unstable by Proposition~\ref{p:stabledecomp}, so $\rho_{\mathcal{U}(M)}(u) = \rho_M(u)  \in R(M)$.  As a consequence, we have that $R(\mathcal{U}(M)) \cup \{\infty\} \subseteq R(M)$. Conversely, fix $x \in M$ with $\mathsf L_M(x)$ finite. Proposition~\ref{p:stabledecomp} states there is a unique expression $x = s + u$ with $s \in \mathcal{S}(M)$ and $u \in \mathcal{U}(M)$, and also guarantees $s$ is uniquely factorable and $u$ is absolutely unstable. Writing $\mathsf L_M(s) = \{\ell\}$, one obtains that $\mathsf L_M(x) = \mathsf L(u) + \ell$. Since $\mathcal{U}(M)$ is not finitely generated, Proposition~\ref{p:extraatom} implies some unstable $u' \in \mathcal U(M)$ satisfies $\mathsf L_{\mathcal{U}(M)}(u') = \mathsf L_M(x)$. In particular, $\rho_M(x) = \rho_{\mathcal U(M)}(u') \in R(\mathcal{U}(M))$. Thus, \eqref{eq:set of elasticities equality} holds.

		It suffices to assume $M = \mathcal U(M)$.  We claim there exists a sequence $\{r_n\} \subseteq M$ such that the sequence $\{\rho(r_n)\}$ converges to $\rho(M)$ and $\rho(r_n) < \rho(M)$ for every natural $n$. Indeed, Theorem~\ref{t:ppmbf} ensures this happens if $\rho(M) = \infty$. So assume that $\rho(M) < \infty$. The existence of such a sequence $\{r_n\}$ is also guaranteed if the elasticity of $M$ is not accepted. Finally, suppose the elasticity of $M$ is accepted and set $a = \min \mathcal{A}(M)$ and $A = \max \mathcal{A}(M)$. Take $r_n = n \mathsf{n}(a) \mathsf{n}(A) + a'$, where $a' \in \mathcal{A}(M) \! \setminus \! \{a, A\}$. Because for any $\epsilon > 0$,
		\[
			\rho(r_n) = \frac{\max \mathsf{L}(r_n)}{\min \mathsf{L}(r_n)} \ge \frac{n \mathsf{d}(a) \mathsf{n}(A) + 1}{n \mathsf{d}(A) \mathsf{n}(a) + 1} > \frac{A}{a} - \epsilon
		\]
		for all large enough $n$, the sequence $\{\rho(r_n)\}$ converges to $\rho(M)$. Also, by Corollary~\ref{c:pmaccept}, $\rho(r_n) < \rho(M)$ for every $n$.
		Now, Proposition~\ref{p:extraatom} ensures that
		\[
			\bigg\{ \frac{\mathsf{n}(\rho(r_n)) + k}{\mathsf{d}(\rho(r_n)) + k} \ \bigg{|} \ n,k \in \nn \bigg\} = \bigg\{ \frac{\max \mathsf{L}(r_n) + c_nk}{\min \mathsf{L}(r_n) + c_nk} \ \bigg{|} \ n,k \in \nn \bigg\} \subset R(M),
		\]
		where $c_n = \max \mathsf{L}(r_n) \mathsf{n}(\rho(r_n))^{-1}$. Thus, the proof follows by Lemma~\ref{l:density}.
	\end{proof}
	
	\begin{example}\label{e:infiniteunstable}
		A primary Puiseux monoid $M$ satisfying the conditions of Theorem~\ref{t:infiniteunstable} can be fully elastic, but in general need not be.  For example, let $p_1, p_2, \dots$ be an increasing enumeration of all prime numbers except $3$, and set $M = \langle n/p_n \mid n \ge 1 \rangle$. Notice that $R(M)$ contains no odd integers larger than $1$. Indeed, each integer $n \ge 2$ has max factorization length $\mathsf M(n) = p_n$ and min factorization length $\mathsf m(n) = 2n$, so~by the proof of Theorem~\ref{t:infiniteunstable},
		\[
			R(M) = \left\{\frac{p_n + k}{2n + k} \ \bigg{|} \ k \ge 0\right\}.
		\]
		In particular, $\mathsf n(r) - \mathsf d(r)$ is odd for any $r \in R(M)$ aside from $r = 1$.  
	\end{example}
	
	
	
		
	
	\section{A bifurcus Puiseux monoid}
	\label{sec:bifurcus}
	
	In this section we introduce a Puiseux monoid that is bifurcus (Definition~\ref{d:bifurcus}), thus demonstrating the broad range of behaviors exhibited by the sets of elasticities of Puiseux monoids.  
	
	\begin{definition}\label{d:bifurcus}
		An atomic monoid $M$ is \emph{bifurcus} if $2 \in \mathsf{L}(x)$ for every $x \in M \! \setminus \!\mathcal A(M)$.
	\end{definition}
	
	Suppose $\{p_{j,n} \mid j, n \ge 1\}$ is a collection of primes satisfying $p_{j,n} \ge 13$ and $p_{j,n} \ge 2^j$ for all $j, n \ge 1$. We construct a bifurcus monoid $M$ iteratively, starting with $M_0 = \langle \frac{1}{2}, \frac{1}{3} \rangle$. Enumerate the reducibles $a_{1,1}, a_{1,2}, \ldots \in M_0$ with no length~2 factorization, and let
	\[
		M_1 = M_0 + \left\langle \frac{a_{1,n}}{2} - \frac{1}{p_{1,n}}, \frac{a_{1,n}}{2} + \frac{1}{p_{1,n}} \ \bigg{|} \ n \ge 1 \right\rangle.
	\]
	Construct $M_j$ from $M_{j-1}$ in the same manner as above by enumerating the reducible elements $a_{j,1}, a_{j,2}, \ldots \in M_{j-1}$ with no length~2 factorization. Finally, let
	\[
		M = \textstyle\bigcup_{j \ge 0} M_j.
	\]
	
	\begin{theorem}\label{t:bifurcus}
		The monoid $M$ constructed above has the following properties.
		\begin{enumerate}
		\item 
		$\min M^\bullet = 1/3$.
		
		\item 
		$\mathcal A(M) = \bigcup_{j \ge 0} \mathcal A(M_j)$.
		
		\item 
		Every element of $M$ has a length~2 factorization.
		
		\end{enumerate}
		In particular, $M$ is atomic, bifurcus, and a BF-monoid.
	\end{theorem}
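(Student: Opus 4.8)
The plan is to establish the three numbered properties by an induction on $j$ and then pass to the union $M = \bigcup_{j \ge 0} M_j$. Throughout, write $g_{j,n}^{+} = \frac{a_{j,n}}{2} + \frac{1}{p_{j,n}}$ and $g_{j,n}^{-} = \frac{a_{j,n}}{2} - \frac{1}{p_{j,n}}$ for the two generators adjoined to $M_{j-1}$ at step $j$, so that $g_{j,n}^{+} + g_{j,n}^{-} = a_{j,n}$; the valuation arguments below use that $p_{j,n}$ does not divide the denominator of any element of $M_{j-1}$, which holds (e.g.)\ when the $p_{j,n}$ are chosen pairwise distinct, as we may assume.

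For part~(1), I would show by induction that $\min M_j^{\bullet} = 1/3$. The base case holds since $M_0 \subseteq \frac{1}{6}\nn_0$. For the inductive step, $\min M_{j-1}^{\bullet} = 1/3$ means $0$ is not a limit point of $M_{j-1}$, so $M_{j-1}$ is atomic (indeed a BF-monoid) by Proposition~\ref{p:zerolimitbf}; hence each $a_{j,n}$, being reducible with no length-$2$ factorization, admits a factorization into at least three atoms of $M_{j-1}$, each of size at least $1/3$, whence $a_{j,n} \ge 1$ and so $g_{j,n}^{\pm} \ge \frac{1}{2} - \frac{1}{p_{j,n}} \ge \frac{1}{2} - \frac{1}{13} = \frac{11}{26} > \frac{1}{3}$. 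Since every nonzero element of $M_j$ is a sum of elements of $M_{j-1}^{\bullet}$ and of generators $g_{j,n}^{\pm}$, all of which are at least $1/3$, we get $\min M_j^{\bullet} = 1/3$. Taking the union yields $\min M^{\bullet} = 1/3$, so $0$ is not a limit point of $M$, and Proposition~\ref{p:zerolimitbf} then gives at once that $M$ is atomic and a BF-monoid --- two of the three concluding assertions.

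For part~(2), the inclusion $\mathcal{A}(M) \subseteq \bigcup_j \mathcal{A}(M_j)$ is immediate, since an atom of $M$ lies in some $M_j$ and any nontrivial factorization of it in $M_j$ would be one in $M$. For the reverse inclusion it suffices to prove that $\mathcal{A}(M_{j-1}) \subseteq \mathcal{A}(M_j)$ and that each $g_{j,n}^{\pm}$ is an atom of $M_j$: granting this, induction gives $\mathcal{A}(M_j) = \mathcal{A}(M_{j-1}) \cup \{g_{j,n}^{\pm} : n \ge 1\}$, and since $\mathcal{A}(M_i) \subseteq \mathcal{A}(M_{i+1})$ for all $i$, any atom of $M_j$ is an atom of every later $M_{j'}$ and hence cannot be written as a sum of two nonzero elements of $M$. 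To prove the two statements I would argue with $p$-adic valuations in the style of Lemma~\ref{l:ppm}: because $p_{j,n} \ge 13$ is coprime to the denominators of $M_{j-1}$, one computes $\mathsf v_{p_{j,n}}(g_{j,n}^{\pm}) = -1$, while every atom of $M_{j-1}$ and every generator $g_{j,m}^{\pm}$ with $m \ne n$ has nonnegative $p_{j,n}$-adic valuation. Hence in any factorization --- of an atom of $M_{j-1}$, or of a $g_{j,n}^{\pm}$ itself --- that uses some $g_{j,n}^{\pm}$, comparing $p_{j,n}$-adic valuations constrains the multiplicities of $g_{j,n}^{+}$ and $g_{j,n}^{-}$; the paired summands then collapse to an integer multiple of the reducible element $a_{j,n} \in M_{j-1}$, and the remaining possibilities are ruled out using $a_{j,n} \ge 1$, the size bounds $p_{j,n} \ge 13$ and $p_{j,n} \ge 2^j$, and the fact that an atom of $M_{j-1}$ has no nontrivial factorization inside $M_{j-1}$. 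This valuation bookkeeping --- in particular, excluding factorizations in which the multiplicities of $g_{j,n}^{+}$ and $g_{j,n}^{-}$ differ --- is the step I expect to be the main obstacle.

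With parts~(1) and~(2) available, part~(3) is essentially forced by the construction. Let $x \in M$ be reducible, say $x = y + z$ with $y, z \in M^{\bullet}$; choosing $j$ so that $M_j$ contains both $y$ and $z$, we have $x \in M_j$ and $x$ reducible in $M_j$. If $x$ has a length-$2$ factorization in $M_j$, then it has one in $M$, because the atoms of $M_j$ are atoms of $M$ by part~(2). Otherwise $x$ is a reducible element of $M_j$ with no length-$2$ factorization, so $x = a_{j+1,n}$ for some $n$, and then $x = g_{j+1,n}^{+} + g_{j+1,n}^{-}$ is a length-$2$ factorization of $x$ in $M_{j+1} \subseteq M$, whose summands are atoms of $M$ by part~(2). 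This establishes part~(3), and together with the atomicity from part~(1) it shows that $M$ is bifurcus, completing all of the concluding assertions.
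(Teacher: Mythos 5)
Your parts (1) and (3) are correct and essentially the paper's own argument: the inductive lower bound $a_{j,n}\ge 1$ on the reducibles without length-$2$ factorizations (the paper uses the slightly sharper bound $a_{j,n}\ge \frac23+\frac12$, but either gives $g_{j,n}^{\pm}>\frac13$), Proposition~\ref{p:zerolimitbf} for atomicity and the BF property, and the observation that a reducible $x\in M$ either already splits into two atoms of some $M_j$ or equals some $a_{j+1,n}=g_{j+1,n}^{+}+g_{j+1,n}^{-}$. Your reduction of part (2) to showing $\mathcal A(M_{j-1})\subseteq\mathcal A(M_j)$ and $g_{j,n}^{\pm}\in\mathcal A(M_j)$ is also a legitimate framing.

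The gap is precisely the step you flag as ``the main obstacle,'' and it is the heart of the theorem. Your valuation bookkeeping handles only the easy case: if the multiplicities $\alpha_n,\beta_n$ of $g_{j,n}^{+},g_{j,n}^{-}$ are equal, the pair collapses to a multiple of the reducible $a_{j,n}\in M_{j-1}$ and atomicity in $M_{j-1}$ gives a contradiction. But when $\alpha_n\ne\beta_n$, the valuation argument only forces $p_{j,n}\mid(\alpha_n-\beta_n)$, i.e.\ at least $p_{j,n}$ copies of some $g_{j,n}^{\pm}$, hence $u\ge p_{j,n}a_{j,n}/2$. Since the atoms of $M_{j-1}$ are unbounded (the $a_{k,m}$ include, e.g., every integer $\ge 2$ of $M_0$, so the generators $g_{k,m}^{\pm}\approx a_{k,m}/2$ are arbitrarily large), the crude bounds $a_{j,n}\ge 1$, $p_{j,n}\ge 13$, $p_{j,n}\ge 2^j$ do not contradict this for large atoms $u$; worse, an atom $u$ introduced at stage $k$ has $p_{k,m}$-adic valuation $-1$, and a putative factorization in a later $M_{j'}$ can supply that denominator through later-stage generators whose own $a_{l,n}$ contain $p_{k,m}$ in their denominators, so the problem cannot be confined to a single stage. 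The paper closes exactly this case with machinery your sketch lacks: it picks a factorization of \emph{maximal} length (available because each $M_{j'}$ is a BF-monoid), notes that two atoms sharing the same associated prime would recombine into some reducible $a_{l,n}$ and lengthen the factorization, so distinct atoms carry distinct primes, and then proves by induction through the stages the growth estimate $2^{k-j}u_1>u$ for the stage-$k$ atom $u_1$ carrying the offending prime; combining this with $\mathsf p(u_1)\mid\alpha_1$ and $\mathsf p(u_1)=p_{k,n}\ge 2^k$ yields $u\ge\alpha_1u_1\ge p_{k,n}u_1\ge 2^{k}u_1>u$, a contradiction. Without this chained estimate (or a substitute relating the size of $u$ to the stages at which the relevant primes were introduced), your ``remaining possibilities are ruled out by size bounds'' does not go through, so part (2) --- and with it the atomicity statement your part (3) relies on --- remains unproven in your proposal.
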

	
	\begin{proof}
		For each generator $u = \frac{1}{2}a_{j,n} \pm p_{j,n}^{-1}$ defined above, let $\mathsf p(u) = p_{j,n}$. 
		The smallest element of $M_0$ with no length~2 factorization is $\frac{2}{3} + \frac{1}{2}$, which means every generator in $\mathcal A(M_1) \! \setminus \! M_0$ is strictly greater than $\frac{1}{2}$ since each $p_{1,n} \ge 13$. Proceeding inductively, if the smallest generator introduced in $M_{j-1}$ is at least $\frac{1}{2}$, the smallest element of $M_j$ with no length~2 factorization must be at least $\frac{2}{3} + \frac{1}{2}$, so~every generator of $M_j \! \setminus \! M_{j-1}$ is also at least~$\frac{1}{2}$. As such, $\min M^\bullet = \frac{1}{3}$.
	
		Suppose by contradiction that $u \in \mathcal A(M_j) \! \setminus \! M_{j-1}$ is reducible in $M_{j'}$ for some $j' > j$.  Pick a factorization $z = \alpha_1 u_1 + \cdots + \alpha_m u_m \in \mathsf{Z}(u)$ in $M_{j'}$ with maximal length.  Notice that no two distinct atoms $u_i$ and $u_{i'}$ in $z$ can satisfy $\mathsf p(u_i) = \mathsf p(u_{i'})$ since $u_i + u_{i'}$ would have a length~3 factorization. Hence each atom $u_i$ in $z$ is uniquely determined by $\mathsf p(u_i)$.
	
		Comparing $\mathsf p(u)$-valuations above, we see that $\mathsf p(u)$ must appear in the denominator of some atom (which we may assume is $u_1$) in $z$.
		The key observation is that if $\mathsf p(u_1) = p_{j+1,n}$ for some $n$, then $a_{j+1,n} \ge u + \frac{2}{3}$, so
		\[
			2u_1 = a_{j+1,n} \pm 2p_{j+1,n}^{-1} \ge u + \textstyle\frac{2}{3} \pm 2p_{j+1,n}^{-1} > u.
		\]
		Inductively, this implies that if $u_1 = \frac{1}{2}a_{k,n} \pm p_{k,n}^{-1}$ for some $k, n \ge 1$, then $2^{k-j}u_1 > u$.
		
		Now, if some other atom in $z$ has $\mathsf p(u_1)$ in its denominator, we can again relabel it as $u_1$, and $2^{k-j}u_1 > u$ will still be satisfied for $\mathsf p(u_1) = p_{k,n}$ with $k$ strictly larger than before. Since only finitely many atoms appear in the factorization for $u$, this process eventually terminates in a choice for $u_1$ such that $\mathsf p(u_1) = p_{k,n}$ does not appear in the denominator of any $u_i$ for $i \ge 2$ and $2^{k-j}u_1 > u$.
		
		At this point, comparing the $\mathsf p(u_1)$-valuation of $u$ and its factorization, we can conclude $\mathsf p(u_1)$ divides $\alpha_1$.  As such,
		\[
			u \ge \alpha_1 u_1 \ge \mathsf p(u_1) u_1 > 2^{k} u_1 \ge u,
		\]
		which is a contradiction.
		
		
		For the third claim, each element $a \in M$ first appears in some $M_j$.  If $a$ has a factorization of length 2, then we are done. Otherwise $a$ has a length~2 factorization in each subsequent monoid by part~(2).
		
	
		We now conclude that $M$ is atomic and a BF-monoid by part~(1) and Theorem~\ref{p:zerolimitbf}, and the remaining claim follows from part~(3).
	\end{proof}
	
	\section*{Acknowledgments}
	
	While working on this paper, the first author was supported by the UC Berkeley Chancellor Fellowship.

\end{document}